\renewcommand\eqref[1]{(\ref{#1})}
\newcommand*{\mint}[1]{%
  \mint@l{#1}{}%
}
\newcommand*{\mint@l}[2]{%
  \@ifnextchar\limits{%
    \mint@l{#1}%
  }{%
    \@ifnextchar\nolimits{%
      \mint@l{#1}%
    }{%
      \@ifnextchar\displaylimits{%
        \mint@l{#1}%
      }{%
        \mint@s{#2}{#1}%
      }%
    }%
  }%
}
\newcommand*{\mint@s}[2]{%
  \@ifnextchar_{%
    \mint@sub{#1}{#2}%
  }{%
    \@ifnextchar^{%
      \mint@sup{#1}{#2}%
    }{%
      \mint@{#1}{#2}{}{}%
    }%
  }%
}
\def\mint@sub#1#2_#3{%
  \@ifnextchar^{%
    \mint@sub@sup{#1}{#2}{#3}%
  }{%
    \mint@{#1}{#2}{#3}{}%
  }%
}
\def\mint@sup#1#2^#3{%
  \@ifnextchar_{%
    \mint@sup@sub{#1}{#2}{#3}%
  }{%
    \mint@{#1}{#2}{}{#3}%
  }%
}
\def\mint@sub@sup#1#2#3^#4{%
  \mint@{#1}{#2}{#3}{#4}%
}
\def\mint@sup@sub#1#2#3_#4{%
  \mint@{#1}{#2}{#4}{#3}%
}
\newcommand*{\mint@}[4]{%
  \mathop{}%
  \mkern-\thinmuskip
  \mathchoice{%
    \mint@@{#1}{#2}{#3}{#4}%
        \displaystyle\textstyle\scriptstyle
  }{%
    \mint@@{#1}{#2}{#3}{#4}%
        \textstyle\scriptstyle\scriptstyle
  }{%
    \mint@@{#1}{#2}{#3}{#4}%
        \scriptstyle\scriptscriptstyle\scriptscriptstyle
  }{%
    \mint@@{#1}{#2}{#3}{#4}%
        \scriptscriptstyle\scriptscriptstyle\scriptscriptstyle
  }%
  \mkern-\thinmuskip
  \int#1%
  \ifx\\#3\\\else_{#3}\fi
  \ifx\\#4\\\else^{#4}\fi
}
\newcommand*{\mint@@}[7]{%
  \begingroup
    \sbox0{$#5\int\m@th$}%
    \sbox2{$#5\int_{}\m@th$}%
    \dimen2=\wd0 %
    \let\mint@limits=#1\relax
    \ifx\mint@limits\relax
      \sbox4{$#5\int_{\kern1sp}^{\kern1sp}\m@th$}%
      \ifdim\wd4>\wd2 %
        \let\mint@limits=\nolimits
      \else
        \let\mint@limits=\limits
      \fi
    \fi
    \ifx\mint@limits\displaylimits
      \ifx#5\displaystyle
        \let\mint@limits=\limits
      \fi
    \fi
    \ifx\mint@limits\limits
      \sbox0{$#7#3\m@th$}%
      \sbox2{$#7#4\m@th$}%
      \ifdim\wd0>\dimen2 %
        \dimen2=\wd0 %
      \fi
      \ifdim\wd2>\dimen2 %
        \dimen2=\wd2 %
      \fi
    \fi
    \rlap{%
      $#5%
        \vcenter{%
          \hbox to\dimen2{%
            \hss
            $#6{#2}\m@th$%
            \hss
          }%
        }%
      $%
    }%
  \endgroup
}
\numberwithin{equation}{section}
\theoremstyle{plain}
\newtheorem{thm}{Theorem}[section]
\newtheorem{cor}[thm]{Corollary}
\theoremstyle{definition}
\newtheorem{defn}[thm]{Definition}
\newtheorem{rem}[thm]{Remark}
\title[Heat equation for Sturm-Liouville operator with singular]{Heat equation for Sturm-Liouville operator with singular propagation and potential}
\author[M. Ruzhansky]{Michael Ruzhansky}
\address{
  Michael Ruzhansky:
  \endgraf
  Department of Mathematics: Analysis, Logic and Discrete Mathematics
  \endgraf
  Ghent University, Belgium
  \endgraf
 and
  \endgraf
  School of Mathematical Sciences
  \endgraf
  Queen Mary University of London
  \endgraf
  United Kingdom
  \endgraf
  {\it E-mail address} {\rm michael.ruzhansky@ugent.be}
  }
\author[A. Yeskermessuly]{Alibek Yeskermessuly}
\address{
  Alibek Yeskermessuly:
  \endgraf
  Altynsarin Arkalyk Pedagogical Institute,  \\ 
  \endgraf
  Arkalyk, Kazakhstan
  \endgraf
and 
  \endgraf
  Department of Mathematics: Analysis, Logic and Discrete Mathematics
  \endgraf
  Ghent University, Belgium
  \endgraf
  {\it E-mail address} {\rm alibek.yeskermessuly@gmail.com}}
\begin{document}

\thanks{The authors are supported by the FWO Odysseus 1 grant G.0H94.18N: Analysis and Partial Differential Equations and by the Methusalem programme of the Ghent University Special Research Fund (BOF) (Grant number 01M01021). Michael Ruzhansky is also supported by EPSRC grants EP/R003025/2 and EP/V005529/1.
\\
\indent
{\it Keywords:} Heat equation; Sturm-Liouville; singular potential; singular propagation; very weak solutions.}

%
%

%
%

%
\maketitle              

\begin{abstract} 
This article considers the initial boundary value problem for the heat equation with the time-dependent Sturm-Liouville operator with singular potentials. To obtain a solution by the method of separation of variables, the problem is reduced to the problem of eigenvalues of the Sturm-Liouville operator. Further on, the solution to the initial boundary value problem is constructed in the form of a Fourier series expansion. A heterogeneous case is also considered. Finally, we establish the well-posedness of the equation in the case when the potential and initial data are distributions, also for singular time-dependent coefficients.

Mathematics Subject Classification: 35K05, 47A75
\end{abstract}

\section{Introduction}

The aim of this paper is to establish the well-posedness results for the heat equation with the Sturm-Liouville operator containing singular potentials. When confronted with issues involving strong singularities, a concept known as \textquotedblleft very weak solutions\textquotedblright \, was introduced in a prior work by Garetto and the first author (\cite{Gar-Ruz}). The rationale behind this approach is that when the equation contains products of various terms, it no longer possesses a clear definition in distribution spaces. Consequently, an alternative perspective on establishing the well-posedness of the equation becomes necessary.

Further establishment of very weak solutions for various types of problems was considered in the works \cite{ARST1}, \cite{ARST2}, \cite{ARST3}, \cite{CRT1}, \cite{CRT2}, \cite{CRT0}, \cite{Lan-Ham}, \cite{Ruz-Tok}, \cite{R-Y}. In \cite{RYS} and \cite{RYes} very weak solutions of the wave equation for the Sturm-Liouville operator with singular potentials were obtained in bounded domains. It turned out that the methods proposed in papers \cite{RYS} and \cite{RYes} can be extended for the heat equations, also allowing singular time-dependent coefficients.

It is well known that the heat equation is easily reduced to ordinary linear equations by the method of separation of variables (see, for example, \cite{Separ}) and the consideration of the eigenvalue problem of the Sturm-Liouville operator.

To obtain the main results, it will be useful to give some preliminary information about the Sturm-Liouville operator with singular potentials. In the paper of Savchuk and Shkalikov \cite{Sav-Shk}, asymptotic estimates of the eigenvalues and eigenfunctions of the Sturm-Liouville operator with singular potentials were obtained. This method was later developed in the works of \cite{N-zSk}, \cite{Savch}, \cite{Sav-Shk2}, \cite{SV}. To apply the concept of very weak solutions to our problem, we will first consider cases where the potentials of the Sturm-Liouville operator are more regular.

More precisely, consider the Sturm-Liouville operator $\mathcal{L}$ generated in the interval (0,1) by the differential expression
\begin{equation}\label{St-L}
    \mathcal{L}y:=-\frac{d^2}{dx^2}y+q(x)y,
\end{equation}
with the boundary conditions
\begin{equation}\label{Dirihle}
    y(0)=y(1)=0. 
\end{equation}
We will first consider the potential $q$ such that
\begin{equation}\label{con-q}
    q(x)=\nu'(x), \qquad \nu\in L^2(0,1).
\end{equation}

Introducing the quasi-derivative in the form
$$y^{[1]}(x)=y'(x)-\nu(x)y(x),$$
the eigenvalue equation $\mathcal{L}y=\lambda y$ reduces to the system
$$\left(\begin{array}{c}
     \phi_1  \\
     \phi_2 
\end{array}\right)'=\left(\begin{array}{cc}
    \nu & 1 \\
    -\lambda-\nu^2 & -\nu
\end{array}\right)\left(\begin{array}{c}
     \phi_1  \\
     \phi_2
\end{array}\right),\quad \phi_1(x,\lambda)=y(x,\lambda),\,\,\phi_2(x,\lambda)=y^{[1]}(x,\lambda).$$

We apply he substitution
$$\phi_1(x,\lambda)=r(x,\lambda)\sin \theta(x,\lambda),\qquad \phi_2(x,\lambda)=\lambda^\frac{1}{2}r(x,\lambda)\cos \theta(x,\lambda),$$
which is a modification of the Prufer substitution (\cite{Ince}), where
\begin{equation}\label{theta}
\theta'(x,\lambda)=\lambda^\frac{1}{2}+\lambda^{-\frac{1}{2}}\nu^2(x)\sin^2 \theta(x,\lambda)+\nu(x)\sin 2\theta(x,\lambda),
\end{equation}
\begin{equation}\label{r}
    r'(x,\lambda)=-r(x,\lambda)\left[\frac{1}{2}\nu^2(x)\lambda^{-\frac{1}{2}}\sin 2\theta(x,\lambda)+\nu(x)\cos 2\theta(x,\lambda)\right].
\end{equation}

We will look for a solution to the equation \eqref{theta} in the form $\theta(x,\lambda)=\lambda^\frac{1}{2} x+\eta(x,\lambda), $ where
\begin{equation}\label{eta}
\eta(x,\lambda)=\lambda^{-\frac{1}{2}}\int\limits_0^x \nu^2(s)\sin^2\theta(s,\lambda)ds+\int\limits_0^x \nu(s)\sin(2\lambda^\frac{1}{2}s+2\eta(s,\lambda))ds
\end{equation}
By using the method of successive approximations, one can  show that this equation has a solution that is uniformly bounded for $0\leq x\leq 1$ and $\lambda\geq 1$. Since $|\nu|^2\in L^1(0,1)$, according to the Riemann-Lebesgue lemma (\cite{Sav-Shk}), $\eta(x,\lambda)=o(1)\to 0$ as $\lambda \to \infty$ uniformly in $x\in (0,1)$. Therefore,
$$\theta(x,\lambda)=\lambda^\frac{1}{2}x+o(1),$$
furthermore, according to \eqref{theta} and \eqref{eta},
$\theta(0,\lambda)=0.$

Solving \eqref{r}, we find
\begin{equation}\label{r(x)}r(x,\lambda)=\exp{\left(-\int\limits_0^x \nu(s)\cos2\theta(s,\lambda)ds-\frac{1}{2}\lambda^{-\frac{1}{2}}\int\limits_0^x \nu^2(s)\sin 2\theta(s,\lambda)ds\right)},
\end{equation}
hence we get $r(0, \lambda)=1$. Using the Riemann-Lebesgue lemma to the last expression, we also have
$$
r(x,\lambda)=\exp{\left(-\int\limits_0^x \nu(s)\cos2\theta(s,\lambda)ds+o(1)\right)}=1+o(1)\quad \text{for }\lambda\to \infty
$$
uniformly in $x\in (0,1)$.

Using the boundary conditions \eqref{Dirihle} we obtain
$$ \phi_1(1,\lambda)=r(1,\lambda)\sin\theta(1,\lambda)=0.$$
According to \eqref{r(x)}, $r(1,\lambda)\neq 0$, therefore $\sin \theta(1,\lambda)=0,\,\,\theta(1,\lambda)=\pi n$.
Then the eigenvalues of the problem $\mathcal{L}y=\lambda y$ with the boundary conditions \eqref{Dirihle} are real (\cite{Al-G}), and given by
\begin{equation}\label{e-val}
    \lambda_n=(\pi n)^2(1+o(n^{-1})),\qquad n=1,2,...,
\end{equation}
and the corresponding eigenfunctions are
\begin{equation}\label{sol-SL}
    \Tilde{\phi}_n(x)=r_n(x)\sin(\sqrt{\lambda_n}x +\eta_n(x)).
\end{equation}
Since $\lambda_n$ are real and according to \eqref{con-q}, $\nu$ is a real valued function, then eigenfunctions $\Tilde{\phi}_n$ are real.

Since $\Tilde{\phi}_n$ are the solutions of $\mathcal{L}y=\lambda y$, substituting 
$$y^{[1]}(x)=\Tilde{\phi}'_n(x)-\nu(x)\Tilde{\phi}_n(x),\,\,
y^{[1]}(x)=\lambda^\frac{1}{2}r_n(x)\cos \theta_n(x),$$ we can define the first derivatives of $\Tilde{\phi}_n$ in the following form
\begin{equation}\label{phi-der}
\Tilde{\phi}'_n(x)=\sqrt{\lambda_n}r_n(x)\cos(\theta_n(x))+\nu(x)\Tilde{\phi}_n(x).
\end{equation}

Now, let us estimate the $\|\Tilde{\phi}_n\|_{L^2}$ using the formula \eqref{sol-SL} as follows
\begin{eqnarray}\label{est-high}
\|\Tilde{\phi}_n\|^2_{L^2}&=&\int\limits_0^1\left|r_n(x)\sin\left(\lambda_n^{\frac{1}{2}}x+\eta_n(x)\right)\right|^2dx\leq \int\limits_0^1\left|r_n(x)\right|^2dx\nonumber\\
&\leq& \int\limits_0^1\left|\exp{\left(-\int\limits_0^x \nu(s)\cos{2\theta_n(s)}ds-\frac{1}{2}\frac{1}{\sqrt{\lambda_n}}\int\limits_0^x\nu^2(s)\sin{2\theta_n(s)}ds\right)}\right|^2dx\nonumber\\
&\lesssim& \int\limits_0^1\exp{\left(2\int\limits_0^x|\nu(s)|ds+\frac{1}{\sqrt{\lambda_n}}\int\limits_0^x|\nu^2(s)|ds\right)}dx\nonumber\\
&\lesssim& \exp{2\left(\|\nu\|_{L^2}+\lambda_n^{-\frac{1}{2}}\|\nu\|^2_{L^2}\right)}<\infty.
\end{eqnarray}

In addition, according Theorem 4 in \cite{Sav-Shk}, we get
\begin{equation}\label{est_low}
  \Tilde{\phi}_n(x)=\sin(\pi nx)+o(1)  
\end{equation}
for sufficiently large $n$, where $o(1)\to 0$ as $n\to \infty$ uniformly in $x\in (0,1)$. Combining this with \eqref{sol-SL}, we see that there exist some $C_0>0$, such that
\begin{equation}\label{low-est}
0<C_0\leq \|\Tilde{\phi}_n\|_{L^2}<\infty \quad \text{for all } n.
\end{equation}

Since the eigenfunctions of the operator $\mathcal{L}$ form an orthogonal basis in $L^2(0,1)$, we can normalize them so that  
\begin{equation}\label{norm-phi}
  \phi_n(x)=\frac{\Tilde{\phi}_n(x)}{\sqrt{\langle \Tilde{\phi}_n,\Tilde{\phi}_n}\rangle}=\frac{\Tilde{\phi}_n(x)}{\|\Tilde{\phi}_n\|_{L^2}}.  
\end{equation}

\section{Homogeneous heat equation}

We consider the heat equation
\begin{equation}\label{C.p1}
         \partial_t u(t,x)+a(t)\mathcal{L} u(t,x)=0,\qquad (t,x)\in [0,T]\times (0,1),
\end{equation}
with initial condition
\begin{equation}\label{C.p2} 
u(0,x)=u_0(x),\,\,\, x\in (0,1),
\end{equation}
and with Dirichlet boundary conditions
\begin{equation}\label{C.p3}
u(t,0)=0=u(t,1),\qquad t\in [0,T],
\end{equation}
where $a(t)\geq a_0>0$ in $[0,T]$ and $\mathcal{L}$ is defined by

\begin{equation}\label{1}
    \mathcal{L} u(t,x):=-\partial^2_x u(t,x)+q(x)u(t,x),\qquad x\in(0,1),
\end{equation}
and $q=\nu'(x)$, $\nu \in L^2(0,1)$. 

To obtain the results of this chapter concerning the initial/boundary value problem \eqref{C.p1}-\eqref{C.p3}, we will first carry out the analysis in the more regular case when $q \in L^ \infty(0,1) $. In this case we obtain the well-posedness in the Sobolev spaces $W^k_\mathcal{L}$ associated with the operator $\mathcal{L}$: we define the Sobolev spaces $W^k_\mathcal{L}$ associated with $\mathcal {L}$, for any $k \in \mathbb{R}$, as the spaces
$$W^k_\mathcal{L}:=\left\{f\in \mathcal{D}'_\mathcal{L}(0,1):\,\mathcal{L}^{k/2}f\in L^2(0,1)\right\},$$
with the norm $\|f\|_{W^k_\mathcal{L}}:=\|\mathcal{L}^{k/2}f\|_{L^2}$. The global space of distributions $\mathcal{D}'_\mathcal{L}(0,1)$ is defined as follows.

The space $C^\infty_\mathcal{L}(0,1):=\mathrm{Dom}(\mathcal{L}^\infty)$ is called the space of test functions for $\mathcal{L}$, where we define 
$$\mathrm{Dom}(\mathcal{L}^\infty):=\bigcap\limits_{m=1}^\infty \mathrm{Dom}(\mathcal{L}^m),$$
where $\mathrm{Dom}(\mathcal{L}^m)$ is the domain of the operator $\mathcal{L}^m$, in turn defined as
$$\mathrm{Dom}(\mathcal{L}^m):=\left\{f\in L^2(0,1): \mathcal{L}^j f\in \mathrm{Dom}(\mathcal{L}),\,\, j=0,1,2,...,m-1\right\}.$$
The Fréchet topology of $C^\infty_\mathcal{L}(0,1)$ is given by the family of norms 
\begin{equation}\label{frechet}
    \|\phi\|_{C^m_\mathcal{L}}:=\max\limits_{j\leq m}\|\mathcal{L}^j\phi\|_{L^2(0,1)},\quad m\in \mathbb{N}_0,\,\, \phi\in C^\infty_\mathcal{L}(0,1).
\end{equation}
The space of $\mathcal{L}$-distributions
$$\mathcal{D}'_\mathcal{L}(0,1):=\mathbf{L}\left(C^\infty_\mathcal{L}(0,1),\mathbb{C}\right)$$
is the space of all linear continuous functionals on $C^\infty_\mathcal{L}(0,1)$. For $\omega \in \mathcal{D}'_\mathcal{L}(0,1)$ and $\phi\in C^\infty_\mathcal{L}(0,1)$, we shall write 
$$\omega(\phi)=\langle \omega, \phi\rangle.$$
For any $\psi \in C^\infty_\mathcal{L}(0,1)$, the functional 
$$C^\infty_\mathcal{L}(0,1)\ni \phi \mapsto \int\limits_0^1 \psi(x)\phi(x)dx$$
is an $\mathcal{L}$-distribution, which gives an embedding $\psi \in C^\infty_\mathcal{L}(0,1)\hookrightarrow \mathcal{D}'_\mathcal{L}(0,1)$.

We introduce the spaces $C^j([0,T],W^k_\mathcal{L}(0,1))$ given by the family of norms
\begin{equation}
    \|f\|_{C^n([0,T],W^k_\mathcal{L}(0,1))}=\max\limits_{0\leq t\leq T}\sum\limits_{j=0}^n\left\|\partial^j_t f(t,\cdot)\right\|_{W^k_\mathcal{L}},
\end{equation}
where $k\in \mathbb{R}, \, f\in C^j([0,T],W^k_\mathcal{L}(0,1)).$
 
\begin{thm}\label{th1}
Assume that $q \in L^\infty(0,1)$, $a\in L^\infty[0,T]$, and $a(t)\geq a_0>0$ for all $t\in [0,T]$. For any $k\in \mathbb{R}$, if the initial condition satisfies $u_0 \in W^k_\mathcal{L}(0,1)$ then the heat equation \eqref{C.p1} with the initial/boundary problem  \eqref{C.p2}-\eqref{C.p3}  has a unique solution $u\in C([0,T], W^{k}_\mathcal{L}(0,1)$. We also have the following estimates:
\begin{equation}\label{eq2.1}
    \|u(t,\cdot)\|_{L^2}\lesssim \|u_0\|_{L^2},
\end{equation}
\begin{equation}\label{eq2.2}
    \|\partial_{t}u(t,\cdot)\|_{L^2}\lesssim\|a\|_{L^\infty[0,T]}\|u_0\|_{W^2_\mathcal{L}},
\end{equation}
\begin{equation}\label{est3}
    \|\partial_{x}u(t,\cdot)\|_{L^2}\lesssim\|u_0\|_{W^1_\mathcal{L}}\left(1+\|\nu\|_{L^2}\right)+\|u_0\|_{L^2}\|\nu\|_{L^\infty},
\end{equation}
\begin{equation}\label{est2.4}
\left\|\partial_xu(t,\cdot)\right\|_{L^2} 
\lesssim \|q\|_{L^\infty} \|u_0\|_{L^2}+\|u_0\|_{W^2_\mathcal{L}},
\end{equation}
\begin{equation}\label{est5}
 \left\|u(t, \cdot)\right\|_{W^k_\mathcal{L}}\lesssim\left\|u_0\right\|_{W^k_\mathcal{L}},
\end{equation}
where the constants in these inequalities are independent of $u_0$, $\nu$, $q$ and $a$.
\end{thm}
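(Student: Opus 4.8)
The plan is to solve the problem by separation of variables, exploiting that the normalized eigenfunctions $\phi_n$ from \eqref{norm-phi} form an orthonormal basis of $L^2(0,1)$. First I would expand the sought solution as $u(t,x)=\sum_{n=1}^\infty u_n(t)\phi_n(x)$ with $u_n(t)=\langle u(t,\cdot),\phi_n\rangle$, and write $\hat u_0(n)=\langle u_0,\phi_n\rangle$. Substituting into \eqref{C.p1} and using $\mathcal{L}\phi_n=\lambda_n\phi_n$ decouples the problem into the scalar ODEs $u_n'(t)+a(t)\lambda_n u_n(t)=0$ with $u_n(0)=\hat u_0(n)$, whose explicit solution is
\begin{equation*}
u_n(t)=\hat u_0(n)\exp\left(-\lambda_n\int_0^t a(s)\,ds\right).
\end{equation*}
This produces the candidate solution $u(t,x)=\sum_n \hat u_0(n)\exp(-\lambda_n\int_0^t a(s)\,ds)\,\phi_n(x)$.

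The central analytic fact driving every estimate is that, since $a(t)\geq a_0>0$ and the $\lambda_n>0$ (positivity for large $n$ being guaranteed by \eqref{e-val}, with the finitely many remaining terms contributing only a bounded factor over $[0,T]$), the multiplier satisfies $\exp(-\lambda_n\int_0^t a(s)\,ds)\leq 1$ for all $t\in[0,T]$. Then by the Plancherel identity in the orthonormal basis $\{\phi_n\}$, together with $\mathcal{L}^{k/2}\phi_n=\lambda_n^{k/2}\phi_n$, I would obtain
\begin{equation*}
\|u(t,\cdot)\|_{W^k_\mathcal{L}}^2=\sum_n\lambda_n^k|\hat u_0(n)|^2\exp\left(-2\lambda_n\int_0^t a(s)\,ds\right)\leq\sum_n\lambda_n^k|\hat u_0(n)|^2=\|u_0\|_{W^k_\mathcal{L}}^2,
\end{equation*}
which gives \eqref{est5} and, for $k=0$, \eqref{eq2.1}. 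Differentiating in $t$ gives $u_n'(t)=-a(t)\lambda_n u_n(t)$, and the same Plancherel computation with the factor $a(t)^2\lambda_n^2\leq\|a\|_{L^\infty[0,T]}^2\lambda_n^2$ yields \eqref{eq2.2}. Uniform convergence in $t$ of these series, dominated by $\|u_0\|_{W^k_\mathcal{L}}$, also delivers $u\in C([0,T],W^k_\mathcal{L}(0,1))$, while uniqueness is immediate from \eqref{eq2.1}.

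For the spatial-derivative estimates \eqref{est3} and \eqref{est2.4} I would avoid differentiating the series termwise and instead use the identity $\|\partial_x u\|_{L^2}^2=\langle\mathcal{L}u,u\rangle-\langle qu,u\rangle$, obtained by pairing $-\partial_x^2 u=\mathcal{L}u-qu$ with $u$ and using the Dirichlet conditions \eqref{C.p3} to kill the boundary terms. The first term is controlled spectrally by $\langle\mathcal{L}u,u\rangle=\sum\lambda_n|u_n|^2$, which equals $\|u\|_{W^1_\mathcal{L}}^2\leq\|u_0\|_{W^1_\mathcal{L}}^2$ and is also bounded by $\|u\|_{W^2_\mathcal{L}}\|u\|_{L^2}\leq\|u_0\|_{W^2_\mathcal{L}}\|u_0\|_{L^2}$ via Cauchy--Schwarz, producing the $W^1_\mathcal{L}$ and $W^2_\mathcal{L}$ contributions respectively. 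The delicate part is $\langle qu,u\rangle$: in the $L^\infty$ case it is bounded crudely by $\|q\|_{L^\infty}\|u_0\|_{L^2}^2$, giving \eqref{est2.4}; for \eqref{est3} one must interpret $q=\nu'$ distributionally, writing $\langle qu,u\rangle=-2\int_0^1\nu\,u\,\partial_x u\,dx$ and estimating it by $\|\nu\|_{L^\infty}\|u\|_{L^2}\|\partial_x u\|_{L^2}$ (or, using $H^1\hookrightarrow L^\infty$ on $(0,1)$, by a term carrying $\|\nu\|_{L^2}$). This makes the bound on $\|\partial_x u\|_{L^2}$ self-referential and must be closed by solving a quadratic inequality of the form $\|\partial_x u\|_{L^2}^2\lesssim A+B\|\partial_x u\|_{L^2}$. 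I expect this to be the main obstacle: one must track carefully the two distinct ways in which $\nu$ enters so that the final right-hand side splits exactly into $\|u_0\|_{W^1_\mathcal{L}}(1+\|\nu\|_{L^2})+\|u_0\|_{L^2}\|\nu\|_{L^\infty}$, and verify that all constants remain independent of $u_0,\nu,q,a$.

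As an alternative for the derivative bounds, the quasi-derivative formula \eqref{phi-der} gives $\phi_n'=\lambda_n^{1/2}r_n\cos\theta_n/\|\tilde\phi_n\|_{L^2}+\nu\,\phi_n$, so that $\partial_x u=\sum_n u_n\lambda_n^{1/2}r_n\cos\theta_n/\|\tilde\phi_n\|_{L^2}+\nu\,u$; combined with the uniform bound on $\|r_n\|_{L^\infty}$ coming from \eqref{r(x)} and the lower bound \eqref{low-est}, this yields a direct estimate of $\partial_x u$ and is the route I would keep in reserve should closing the quadratic inequality with the stated constants prove awkward.
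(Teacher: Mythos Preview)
Your overall plan coincides with the paper's: separation of variables, the explicit formula $u_n(t)=\hat u_0(n)\exp(-\lambda_n\int_0^t a)$, and Plancherel in the orthonormal basis $\{\phi_n\}$ give \eqref{eq2.1}, \eqref{eq2.2} and \eqref{est5} exactly as the authors do.

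The genuine divergence is in the spatial-derivative estimates. For \eqref{est3} the paper does \emph{not} use an energy identity; it differentiates the series termwise via the quasi-derivative formula \eqref{phi-der}, splitting $\partial_x u$ into a ``$\sqrt{\lambda_n}\,r_n\cos\theta_n$'' piece and a ``$\nu u$'' piece, and then invokes an external asymptotic result (Theorem~2 in \cite{Savch}) stating $r_n=1+\rho_n$ with $\|\rho_n\|_{L^2}\lesssim\|\nu\|_{L^2}$, which is precisely what produces the factor $(1+\|\nu\|_{L^2})$. In other words, the paper's route is your ``reserve'' alternative, not your primary one. Your energy-identity approach $\|\partial_x u\|_{L^2}^2=\langle\mathcal{L}u,u\rangle-\langle qu,u\rangle$ together with $\langle qu,u\rangle=-2\int \nu\,u\,\partial_x u$ and the quadratic inequality actually yields the cleaner bound $\|\partial_x u\|_{L^2}\lesssim\|u_0\|_{W^1_\mathcal{L}}+\|\nu\|_{L^\infty}\|u_0\|_{L^2}$, which implies \eqref{est3} and avoids the citation to \cite{Savch}; so your argument is more self-contained, at the cost of not explaining the structural origin of the $\|\nu\|_{L^2}$ term.

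For \eqref{est2.4} be aware that the paper's proof actually establishes the bound for $\|\partial_x^2 u\|_{L^2}$, not $\|\partial_x u\|_{L^2}$: it uses $\phi_n''=(q-\lambda_n)\phi_n$ and Plancherel to get $\|\partial_x^2 u\|_{L^2}\lesssim\|q\|_{L^\infty}\|u_0\|_{L^2}+\|u_0\|_{W^2_\mathcal{L}}$. The $\partial_x$ in the statement appears to be a typo, so your energy-identity treatment of \eqref{est2.4} is aimed at a different quantity than what the authors prove.
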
 

We note that $q\in L^\infty(0,1)$ implies that $\nu \in L^\infty(0,1)$ and hence $\nu\in L^2(0,1)$, so that the formulas in the introduction hold true.

\begin{proof}
Using the method of separating variables (see, for example, \cite{Separ}), we will look for a solution to the problem \eqref{C.p1}-\eqref{C.p3} in the form
$$u(t,x)=T(t)X(x),$$
where $T(t)$, $X(x)$ are unknown functions that must be determined. Substituting $u(t,x)=T(t)X(x)$ into the equation \eqref{C.p1}, we obtain the equation
$$T'(t)X(x)+a(t)\left(-T(t)X''(x)+q(x)T(t)X(x)\right)=0.$$
Dividing by $a(t)T(t)X(x)$, we have
\begin{equation}\label{3-1}
    \frac{T'(t)}{a(t)T(t)}=\frac{X''(x)-q(x)X(x)}{X(x)}=-\lambda,
\end{equation}
for some constant $\lambda$. Therefore, if there exists a solution $u(t,x) = T(t)X(x)$ of the heat equation, then $T(t)$ and $X(x)$ must satisfy the equations
$$\frac{T'(t)}{a(t)T(t)}=-\lambda,$$
$$\frac{X''(x)-q(x)X(x)}{X(x)}=-\lambda,$$
for some constant $\lambda$. In addition, in order for $u$ to satisfy the boundary conditions \eqref{C.p3}, we need our function $X$ to satisfy the boundary conditions \eqref{Dirihle}. That is, we need to find a function $X$ and a scalar $\lambda$, such that
\begin{equation}\label{4}
    -X''(x)+q(x)X(x)=\lambda X(x),
\end{equation}
\begin{equation}\label{5}
    X(0)=X(1)=0.
\end{equation}

Thus, we have obtained an eigenvalue problem for the Sturm-Liouville operator. That is, the equation \eqref{4} with the boundary condition \eqref{5} has eigenvalues of the form \eqref{e-val} with corresponding eigenfunctions of the form \eqref{sol-SL}.

Solving the left side of the equation \eqref{3-1} with respect to the independent variable $t$, i.e.
\begin{equation}\label{3}
        T'(t)=-\lambda T(t)a(t),  \qquad t\in [0,T],
\end{equation}
with the initial conditions \eqref{C.p2}, we get
$$T_n(t)=B_n e^{-\lambda_n\int\limits_0^ta(\tau)d\tau},$$
where
$$B_n=\int\limits_0^1u_0(x)\phi_n(x)dx,$$
since by \eqref{sol-SL} $\varphi_n$ are all real.

Consequently, the solution of the heat equation \eqref{C.p1}  with the initial/boundary conditions \eqref{C.p2}-\eqref{C.p3} has the form
\begin{equation}\label{23}
    u(t,x)=\sum\limits_{n=1}^\infty B_ne^{-\lambda_n\int\limits_0^ta(\tau)d\tau}\phi_n(x). 
\end{equation}

Let us prove that $u\in C^2([0,T],L^2(0,1))$. By using the Cauchy-Schwarz inequality for a fixed $t$, we get

\begin{eqnarray}\label{25-0}
\|u(t, \cdot)\|^2_{L^2}&=&\int\limits_0^1|u(t,x)|^2dx =\int\limits_0^1\left|\sum\limits_{n=1}^\infty B_n e^{-\lambda_n\int\limits_0^ta(\tau)d\tau}\phi_n(x)\right|^2dx\nonumber\\
&\lesssim& \int\limits_0^1\sum\limits_{n=1}^\infty\left|B_n e^{-\lambda_n\int\limits_0^ta(\tau)d\tau}\right|^2|\phi_n(x)|^2dx.
\end{eqnarray}
Since the first parenthesis in the last expression does not depend on $x$, we have
$$\int\limits_0^1\sum\limits_{n=1}^\infty\left|B_n e^{-\lambda_n\int\limits_0^ta(\tau)d\tau}\right|^2|\phi_n(x)|^2dx=\sum\limits_{n=1}^\infty\left|B_n e^{-\lambda_n\int\limits_0^ta(\tau)d\tau}\right|^2\int\limits_0^1|\phi_n(x)|^2dx,$$
and according to \eqref{norm-phi}
\begin{equation}\label{phi=1}
  \int\limits_0^1|\phi_n(x)|^2dx=1.  
\end{equation}
Taking into account \eqref{e-val}, we obtain 
\begin{eqnarray}\label{B_n}
    \left|B_n e^{-\lambda_n\int\limits_0^ta(\tau)d\tau}\right|^2\leq |B_n|^2\left|e^{-\int\limits_0^ta(\tau)d\tau}\right|^2=|B_n|^2|g(t)|^2,
\end{eqnarray}
Now, let us estimate $|g(t)|^2$, where
$$g(t)=\exp{\left\{-\int\limits_0^ta(\tau)d\tau\right\}}.$$
Since $0<a_0\leq a(t)$ at $t\in [0,T]$, we have $$|g(t)|^2\leq \|g\|^2_{L^\infty}=1,$$ 
therefore,
\begin{eqnarray}\label{25}
\|u(t, \cdot)\|^2_{L^2}&\lesssim&\sum\limits_{n=1}^\infty\left|B_n e^{-\lambda_n\int\limits_0^ta(\tau)d\tau}\right|^2\leq \sum\limits_{n=1}^\infty\left|B_n\right|^2.
\end{eqnarray}
By using the Parseval identity,
we get
\begin{equation}\label{B_n+}
 \sum\limits_{n=1}^\infty |B_n|^2=\int\limits_0^1|u_0(x)|^2dx=\|u_0\|^2_{L^2}.
\end{equation}

Therefore, we obtain
$$
\|u(t,\cdot)\|^2_{L^2}\lesssim \|u_0\|^2_{L^2}.
$$

Now, let us estimate $\|\partial_t u(t,\cdot)\|^2_{L^2}$ by using \eqref{phi=1} and \eqref{B_n},
\begin{eqnarray}\label{t26}
\|\partial_t u(t,\cdot)\|^2_{L^2}&=&\int\limits_0^1|\partial_tu(t,x)|^2dt = \int\limits_0^1\left|\sum\limits_{n=1}^\infty\left(-\lambda_n a(t)B_n e^{-\lambda_n \int\limits_0^ta(\tau)d\tau}\phi_n(x)\right)\right|^2dx \nonumber\\ 
&\lesssim& \int\limits_0^1\sum\limits_{n=1}^\infty\left|\lambda_n a(t)B_n e^{-\lambda_n\int\limits_0^ta(\tau)d\tau}\right|^2|\phi_n(x)|^2dx\nonumber\\
&\leq&  \sum\limits_{n=1}^\infty |a(t)|^2|\lambda_n B_n |^2\leq\|a\|^2_{L^\infty[0,T]}\sum\limits_{n=1}^\infty|\lambda_n B_n |^2.
\end{eqnarray}
Since $\lambda_n$ are eigenvalues with corresponding eigenfunctions $\phi_n(x)$ of the operator $\mathcal{L}$, we obtain 
\begin{eqnarray}\label{21-1}
\sum\limits_{n=1}^\infty|\lambda_n B_n|^2&=& \sum\limits_{n=1}^\infty\left|\lambda_n\int\limits_0^1 u_0(x)\phi_n(x)dx\right|^2 =\sum\limits_{n=1}^\infty\left| \int\limits_0^1 \lambda_nu_0(x)\phi_n(x)dx\right|^2\nonumber\\
&=&\sum\limits_{n=1}^\infty\left| \int\limits_0^1 \mathcal{L}u_0(x)\phi_n(x)dx\right|^2.
      \end{eqnarray}
From Parseval's identity it follows that
\begin{equation}\label{21-2}
\sum\limits_{n=1}^\infty\left| \int\limits_0^1 \mathcal{L}u_0(x)\phi_n(x)dx\right|^2=\|\mathcal{L}u_0\|^2_{L^2}=\|u_0\|^2_{W^2_\mathcal{L}}.
\end{equation}
Thus, 
$$\|\partial_t u(t,\cdot)\|^2_{L^2}\lesssim \|a\|^2_{L^\infty[0,T]}\|u_0\|^2_{W^2_\mathcal{L}}.$$

Further on, we will estimate $\|\partial_x u(t,\cdot)\|^2_{L^2}$ by using \eqref{phi-der} and \eqref{norm-phi} for $\phi'_n$:
\begin{eqnarray*}
\|\partial_x u(t,\cdot)\|^2_{L^2}&=&\int\limits_0^1|\partial_xu(t,x)|^2dt=
\int\limits_0^1\left|\sum\limits_{n=1}^\infty B_ne^{-\lambda_n\int\limits_0^ta(\tau)d\tau}\phi'_n(x)\right|^2dx \nonumber \\ &=&\int\limits_0^1\left|\sum\limits_{n=1}^\infty B_ne^{-\lambda_n\int\limits_0^ta(\tau)d\tau}\left(\frac{\sqrt{\lambda_n}r_n(x) \cos\theta_n(x)}{\|\Tilde{\phi}_n\|_{L^2}}+\nu(x)\phi_n(x)\right)\right|^2dx.
\end{eqnarray*}
In compliance with \eqref{25}, \eqref{est-high} and \eqref{est_low}, there exist some $C_0>0$, such that $C_0<\|\Tilde{\phi}_n\|_{L^2}<\infty$, so that
\begin{eqnarray*}
\|\partial_x u(t,\cdot)\|^2_{L^2}&\lesssim&\sum\limits_{n=1}^\infty\left|\sqrt{\lambda_n}B_n\right|^2\int\limits_0^1|r_n(x)|^2dx\\
&+&\sum\limits_{n=1}^\infty|B_n|^2\int\limits_0^1|\nu(x)\phi_n(x)|^2dx.
\end{eqnarray*}
For $r_n(x)$ according Theorem 2 in \cite{Savch}, we have
$$r_n(x)=1+\rho_n(x),\quad \|\rho_n\|_{L^2}\lesssim \|\nu\|_{L^2},$$
where the constant is independent of $\nu$ and $n$. As a result,
$$\int\limits_0^1|r_n(x)|^2dx\lesssim 1+\|\nu\|^2_{L^2}.$$
For the second term we obtain
$$\int\limits_0^1|\nu(x)\phi_n(x)|^2dx\leq \|\nu\|^2_{L^\infty}\|\phi_n\|^2_{L^2}=\|\nu\|^2_{L^\infty},$$
since $\varphi_n$ are orthonormal in $L^2(0,1)$.
Using the property of the operator $\mathcal{L}$ in the Sobolev space $W_{\mathcal{L}}^k$ and the Parseval identity, we obtain
\begin{eqnarray*}
 \sum\limits_{n=1}^\infty\left|\sqrt{\lambda_n}B_n\right|^2&=& \sum\limits_{n=1}^\infty\left|\int\limits_0^1\sqrt{\lambda_n}u_0(x)\phi_n(x)dx\right|^2\\
 &=&\sum\limits_{n=1}^\infty\left|\int\limits_0^1\mathcal{L}^\frac{1}{2}u_0(x)\phi_n(x)dx\right|^2=\left\|\mathcal{L}^\frac{1}{2}u_0\right\|^2_{L^2}=\|u_0\|^2_{W^1_\mathcal{L}}.
\end{eqnarray*}
Taking into account the last relations, we obtain
\begin{eqnarray}\label{u_x}
\|\partial_x u(t,\cdot)\|^2_{L^2}&\lesssim&\sum\limits_{n=1}^\infty\left|\sqrt{\lambda_n}B_n\right|^2\left(1+\|\nu\|^2_{L^2}\right)+\sum\limits_{n=1}^\infty \left|B_n\right|^2\|\nu\|^2_{L^\infty}\nonumber\\
&\leq& \|u_0\|^2_{W^1_\mathcal{L}}\left(1+\|\nu\|^2_{L^2}\right)+\|u_0\|^2_{L^2}\|\nu\|^2_{L^\infty},
\end{eqnarray}
which implies \eqref{est3}.

For the estimate of $\left\|\partial_x^2u(t, \cdot)\right\|_{L^2}$ we use $\phi''_n(x)=(q(x)-\lambda_n)\phi_n(x)$ and \eqref{B_n}, so that
\begin{eqnarray}\label{u_xx}
\left\|\partial_x^2u(t, \cdot)\right\|^2_{L^2}&=&\int\limits_0^1\left|\partial^2_xu(t,x)\right|^2dx=\int\limits_0^1\left|\sum\limits_{n=1}^\infty B_n e^{-\lambda_n\int\limits_0^ta(\tau)d\tau} \phi''_n(x)\right|^2dx\nonumber\\
&\lesssim& \int\limits_0^1\left(\sum\limits_{n=1}^\infty \left|{B_n} e^{-\lambda_n\int\limits_0^ta(\tau)d\tau}\right|^2|(q(x)-\lambda_n)\phi_n(x)|^2\right)dx\nonumber\\
&\lesssim&\int\limits_0^1|q(x)|^2\sum\limits_{n=1}^\infty \left|B_n\right|^2|\phi_n(x)|^2dx+\int\limits_0^1\sum\limits_{n=1}^\infty |\lambda_n B_n|^2|\phi_n(x)|^2 dx\nonumber\\
&\leq&\|q\|^2_{L^\infty}\sum\limits_{n=1}^\infty |B_n|^2 +\sum\limits_{n=1}^\infty \left|\lambda_n B_n\right|^2. 
\end{eqnarray}
Using \eqref{B_n+}, \eqref{21-1} and \eqref{21-2} for \eqref{u_xx}, we obtain
\begin{eqnarray*}
\left\|\partial^2_xu(t,\cdot)\right\|^2_{L^2} 
&\lesssim &\|q\|^2_{L^\infty} \|u_0\|^2_{L^2}+\|u_0\|^2_{W^2_\mathcal{L}},
\end{eqnarray*}
implying \eqref{est2.4}.

Finally, we will get the last estimate \eqref{est5} using that $\mathcal{L}^ku=\lambda_n^ku$ and Parseval's identity:
\begin{eqnarray*}
 \left\|u(t, \cdot)\right\|^2_{W^k_\mathcal{L}}&=&\left\|\mathcal{L}^\frac{k}{2}u(t, \cdot)\right\|^2_{L^2}=\int\limits_0^1\left|\mathcal{L}^\frac{k}{2}u(t,x)\right|^2dx\\
&=&\int\limits_0^1\left|\sum\limits_{n=1}^\infty  B_n e^{-\lambda_n\int\limits_0^ta(\tau)d\tau}\lambda_n^\frac{k}{2}\phi_n(x)\right|^2dx\lesssim\sum\limits_{n=1}^\infty \left| \lambda_n^\frac{k}{2}B_n\right|^2\\
&=&\left\|\mathcal{L}^\frac{k}{2}u_0\right\|^2_{L^2}=\left\|u_0\right\|^2_{W^k_\mathcal{L}}.
\end{eqnarray*}
The proof of Theorem \ref{th1} is complete.
\end{proof}

We establish the following statement to eliminate the dependence of Sobolev spaces on the operator $\mathcal{L}$, while the estimates will depend only on the initial data $u_0$ and potential $q$. This will be important for further analysis.

\begin{cor}\label{cor1}
Assume that $q,\, \nu \in L^\infty(0,1)$, $a\in L^\infty[0,T]$, and $a(t)\geq a_0>0$ for all $t\in [0,T]$. If the initial condition satisfies $u_0 \in L^2(0,1)$ and $u_0''\in L^2(0,1)$ then the heat equation \eqref{C.p1} with the initial/boundary conditions  \eqref{C.p2}-\eqref{C.p3}  has a unique solution $u\in C([0,T], L^2(0,1))$ which satisfies the estimates
\begin{equation}\label{ec1}
      \|u(t,\cdot)\|_{L^2}\lesssim \|u_0\|_{L^2},
\end{equation}
\begin{equation}\label{ec2}
\|\partial_t u(t,\cdot)\|_{L^2}\lesssim \|a\|_{L^\infty[0,T]}\left(\|u''_0\|_{L^2}+\|q\|_{L^\infty}\|u_0\|_{L^2}\right),
\end{equation}
\begin{eqnarray}\label{ec3}
\|\partial_{x}u(t,\cdot)\|_{L^2}\lesssim \left(\|u''_0\|_{L^2}+\|q\|_{L^\infty}\|u_0\|_{L^2}\right)\left(1 +\|\nu\|_{L^2}\right)+  \|u_0\|_{L^2}\|\nu\|_{L^\infty},
\end{eqnarray}
\begin{equation}\label{ec4}
\left\|\partial^2_xu(t,\cdot)\right\|_{L^2} 
\lesssim \|u''_0\|_{L^2}+\|q\|_{L^\infty}\|u_0\|_{L^2},
\end{equation}
where  the constants in these inequalities are independent of $u_0$, $\nu$, $q$ and $a$.
\end{cor}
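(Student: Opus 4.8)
The plan is to deduce Corollary \ref{cor1} directly from Theorem \ref{th1}: under the present hypotheses $q,\nu\in L^\infty(0,1)$ all assumptions of that theorem are met, so it already provides existence, uniqueness, the Fourier representation \eqref{23}, and the estimates \eqref{eq2.1}--\eqref{est2.4}, while its $k=0$ instance gives $u\in C([0,T],L^2(0,1))$. The whole remaining task is to re-express the operator-adapted norms $\|u_0\|_{W^2_\mathcal{L}}$ and $\|u_0\|_{W^1_\mathcal{L}}$ occurring there through the elementary quantities $\|u_0\|_{L^2}$, $\|u_0''\|_{L^2}$ and $\|q\|_{L^\infty}$.

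The decisive step is the identity $\mathcal{L}u_0=-u_0''+qu_0$, which holds because $u_0\in L^2$ with $u_0''\in L^2$ lies in the domain of $\mathcal{L}$. Since moreover $qu_0\in L^2$ (as $q\in L^\infty$), this shows $u_0\in W^2_\mathcal{L}$ and gives
$$\|u_0\|_{W^2_\mathcal{L}}=\|\mathcal{L}u_0\|_{L^2}\leq\|u_0''\|_{L^2}+\|q\|_{L^\infty}\|u_0\|_{L^2}.$$
Substituting this into \eqref{eq2.2} immediately yields \eqref{ec2}, and substituting it into the second-order estimate \eqref{est2.4} yields \eqref{ec4}; the estimate \eqref{ec1} coincides with \eqref{eq2.1}.

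The only genuinely new point, which I expect to be the main obstacle, is \eqref{ec3}: its starting estimate \eqref{est3} involves the fractional norm $\|u_0\|_{W^1_\mathcal{L}}=\|\mathcal{L}^{1/2}u_0\|_{L^2}$, for which there is no closed expression in terms of $u_0''$. I would resolve it by interpolating on the Fourier side: writing $\lambda_n|B_n|^2=(\lambda_n|B_n|)\,|B_n|$ and applying the Cauchy--Schwarz inequality gives
$$\|u_0\|_{W^1_\mathcal{L}}^2=\sum_{n=1}^\infty\lambda_n|B_n|^2\leq\left(\sum_{n=1}^\infty\lambda_n^2|B_n|^2\right)^{1/2}\left(\sum_{n=1}^\infty|B_n|^2\right)^{1/2}=\|u_0\|_{W^2_\mathcal{L}}\,\|u_0\|_{L^2},$$
so that $\|u_0\|_{W^1_\mathcal{L}}\leq\|u_0\|_{W^2_\mathcal{L}}^{1/2}\|u_0\|_{L^2}^{1/2}$. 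Combining this with the $W^2_\mathcal{L}$-bound above, using $\sqrt{ab}\leq a+b$, and absorbing the residual $\|u_0\|_{L^2}$ via the lower bound $\lambda_n\geq c>0$ (which follows from \eqref{e-val} and the positivity of $\mathcal{L}$ underlying the definition of $W^k_\mathcal{L}$, and yields $\|u_0\|_{L^2}\lesssim\|u_0\|_{W^2_\mathcal{L}}$), I would arrive at $\|u_0\|_{W^1_\mathcal{L}}\lesssim\|u_0''\|_{L^2}+\|q\|_{L^\infty}\|u_0\|_{L^2}$. Inserting this into \eqref{est3} gives \eqref{ec3} and completes the argument.
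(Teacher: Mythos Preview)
Your proposal is correct and follows essentially the same line as the paper: both arguments hinge on the single observation $\|u_0\|_{W^2_\mathcal{L}}=\|\mathcal{L}u_0\|_{L^2}=\|-u_0''+qu_0\|_{L^2}\leq\|u_0''\|_{L^2}+\|q\|_{L^\infty}\|u_0\|_{L^2}$, and on the eigenvalue lower bound from \eqref{e-val} to pass from $W^1_\mathcal{L}$ to $W^2_\mathcal{L}$. The only cosmetic difference is in how the $W^1_\mathcal{L}$-norm is handled for \eqref{ec3}: the paper returns to the intermediate inequality \eqref{u_x} and bounds $\sum_n|\sqrt{\lambda_n}B_n|^2\leq\sum_n|\lambda_n B_n|^2$ directly via $\lambda_n\geq1$, whereas you first interpolate by Cauchy--Schwarz and then invoke $\|u_0\|_{L^2}\lesssim\|u_0\|_{W^2_\mathcal{L}}$. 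Your route is a little more circuitous (the Cauchy--Schwarz step is superfluous once you use the eigenvalue lower bound anyway), but it reaches the same conclusion with the same ingredients.
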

\begin{proof}
The estimate \eqref{ec1} immediately follows from Theorem \ref{th1} so we move on to the next estimates.

By \eqref{t26} we have
\begin{equation*}
\|\partial_t u(t,\cdot)\|^2\lesssim \|a\|^2_{L^\infty[0,T]}\sum\limits_{n=1}^\infty|\lambda_n B_n |^2.
\end{equation*}
Since $\lambda_n$ are the eigenvalues of the operator $\mathcal{L}$, we obtain 
\begin{eqnarray}\label{21}\sum\limits_{n=1}^\infty|\lambda_n B_n|^2&=& \sum\limits_{n=1}^\infty\left| \int\limits_0^1 \lambda_n u_0(x)\phi_n(x)dx\right|^2
=\sum\limits_{n=1}^\infty\left| \int\limits_0^1 \left(-u''_0(x)+q(x)u_0(x)\right)\phi_n(x)dx\right|^2\nonumber \\
&\lesssim& \sum\limits_{n=1}^\infty\left| \int\limits_0^1 u''_0(x)\phi_n(x)dx\right|^2+\sum\limits_{n=1}^\infty\left|\int\limits_0^1q(x)u_0(x)\phi_n(x)dx\right|^2.
      \end{eqnarray}
Using Parseval's identity and since $q\in L^\infty$, we have
$$\sum\limits_{n=1}^\infty\left|\int\limits_0^1q(x)u_0(x)\phi_n(x)dx\right|^2=\sum\limits_{n=1}^\infty\left|\langle (q u_0),\phi_n\rangle\right|^2=\|q u_0\|^2_{L^2}\leq \|q\|^2_{L^\infty} \|u_0\|^2_{L^2},$$
and also we use Parseval's identity for the first term
$$\sum\limits_{n=1}^\infty\left| \int\limits_0^1 u''_0(x)\phi_n(x)dx\right|^2=\sum\limits_{n=1}^\infty |u_{0,n}''|^2=\|u_0''\|^2_{L^2}.$$
Consequently
\begin{eqnarray}\label{LA}\sum\limits_{n=1}^\infty|\lambda_nB_n|^2&\lesssim& \|u_0''\|^2_{L^2}+\|q\|^2_{L^\infty}\|u_0\|^2_{L^2}.
\end{eqnarray}
Thus, 
$$\|\partial_t u(t,\cdot)\|^2_{L^2}\lesssim \|a\|^2_{L^\infty[0,T]}\left(\|u''_0\|^2_{L^2}+\|q\|^2_{L^\infty}\|u_0\|^2_{L^2}\right),$$
proving \eqref{ec2}.

From \eqref{u_x} we have
\begin{eqnarray*}
\|\partial_x u(t,\cdot)\|^2_{L^2}&=&\int\limits_0^1|\partial_xu(t,x)|^2dt
=
\int\limits_0^1\left|\sum\limits_{n=1}^\infty B_n e^{-\lambda_n\int\limits_0^ta(\tau)d\tau}\phi'_n(x)\right|^2dx\\
&\lesssim&\sum\limits_{n=1}^\infty\left|\sqrt{\lambda_n}B_n\right|^2\left(1+\|\nu\|^2_{L^2}\right)+\sum\limits_{n=1}^\infty \left|B_n\right|^2\|\nu\|^2_{L^\infty}.
\end{eqnarray*}
Taking into account \eqref{e-val}, using \eqref{LA} and Parseval's identity we get
\begin{eqnarray*}
\|\partial_x u(t,\cdot)\|^2_{L^2}&\lesssim&\sum\limits_{n=1}^\infty\left|\sqrt{\lambda_n}B_n\right|^2\left(1+\|\nu\|^2_{L^2}\right)+\sum\limits_{n=1}^\infty \left|B_n\right|^2\|\nu\|^2_{L^\infty}\\
&\leq& \sum\limits_{n=1}^\infty\left|\lambda_n B_n\right|^2\left(1+\|\nu\|^2_{L^2}\right)+\sum\limits_{n=1}^\infty \left|B_n\right|^2\|\nu\|^2_{L^\infty}\\
&\lesssim& \left(\|u''_0\|^2_{L^2}+\|q\|^2_{L^\infty}\|u_0\|^2_{L^2}\right)\left(1 +\|\nu\|^2_{L^2}\right)+  \|u_0\|^2_{L^2}\|\nu\|^2_{L^\infty},
\end{eqnarray*}
implying \eqref{ec3}. 

For $\left\|\partial_x^2u(t, \cdot)\right\|^2_{L^2}$ from \eqref{u_xx} we have
\begin{eqnarray*}
\left\|\partial_x^2u(t, \cdot)\right\|^2_{L^2}&=&\int\limits_0^1\left|\partial^2_xu(t,x)\right|^2dx=\int\limits_0^1\left|\sum\limits_{n=1}^\infty B_n e^{-\lambda_n\int\limits_0^ta(\tau)d\tau} \phi''_n(x)\right|^2dx\nonumber\\
&\lesssim& \|q\|^2_{L^\infty}\sum\limits_{n=1}^\infty |B_n|^2 +\sum\limits_{n=1}^\infty \left|\lambda_n B_n\right|^2. \end{eqnarray*}
Using \eqref{LA} and Parseval’s identity  we obtain
\begin{eqnarray*}
\left\|\partial_x^2u(t, \cdot)\right\|^2_{L^2}&\lesssim&\|q\|^2_{L^\infty}\sum\limits_{n=1}^\infty |B_n|^2 +\sum\limits_{n=1}^\infty \left|\lambda_n B_n\right|^2\\
&\lesssim& \|q\|^2_{L^\infty}\|u_0\|^2_{L^2}+\|u''_0\|^2_{L^2}+\|q\|^2_{L^\infty}\|u_0\|^2_{L^2}=\|u''_0\|^2_{L^2}+2\|q\|^2_{L^\infty}\|u_0\|^2_{L^2}.
\end{eqnarray*}
The proof of Corollary \ref{cor1} is complete.
\end{proof}

\section{Non-homogeneous heat equation}

In this section, we study and establish estimates for the non-homogeneous heat equation with initial/boundary conditions 
\begin{equation}\label{nonh}
    \left\{\begin{array}{l}
    \partial_t u(t,x)+a(t)\mathcal{L} u(t,x)=f(t,x),\qquad (t,x)\in [0,T]\times (0,1),\\
    u(0,x)=u_0(x),\quad x\in (0,1),\\
    u(t,0)=0=u(t,1),\quad t\in[0,T],
    \end{array}\right.
\end{equation}
where operator $\mathcal{L}$ is defined by 
$$\mathcal{L}=-\frac{\partial^2}{\partial x^2}+q(x),\qquad x\in(0,1).$$

\begin{thm}\label{non-hom}
Assume that $q \in L^\infty(0,1)$, $a(t)\geq a_0>0$ for all $t\in [0,T]$, $a'\in L^\infty[0,T]$ and $f\in C^1([0,T],W^k_\mathcal{L}(0,1))$ for some $k\in \mathbb{R}$. If the initial condition satisfies $u_0 \in W^{k}_\mathcal{L}$ then the non-homogeneous heat equation with initial/boundary conditions \eqref{nonh} has the unique solution $u\in C([0,T], W^{k}_\mathcal{L})$ which then satisfies the estimates
\begin{equation}\label{es-nh1}
        \|u(t,\cdot)\|_{L^2}\lesssim \|u_0\|_{L^2}+T\|f\|_{C([0,T],L^2(0,1))},
\end{equation}
\begin{equation}\label{es-nh2}
\|\partial_tu(t,\cdot)\|_{L^2}\lesssim \|a\|_{L^\infty[0,T]}\left(\|u_0\|_{W^1_\mathcal{L}}+\frac{T}{a_0} \|f\|_{C^1([0,T],L^2(0,1))}\right),
\end{equation}
\begin{eqnarray}\label{es-nh3}
\|\partial_xu(t,\cdot)\|_{L^2}&\lesssim& \left(1+\|\nu\|_{L^2}\right)\left(\|u_0\|_{W^1_\mathcal{L}}+T\|f\|_{C([0,T],W^1_\mathcal{L}(0,1))}\right),
\end{eqnarray}
\begin{eqnarray}\label{es-nh4}
\|\partial^2_xu(t,\cdot)\|_{L^2}&\lesssim& \|q\|_{L^\infty} \left(\|u_0\|_{L^2}+T\|f\|_{C([0,T],L^2(0,1))}\right)\nonumber\\
&+&\|u_0\|_{W^2_\mathcal{L}}+\frac{T}{a_0}\|f\|_{C^1([0,T],L^2(0,1))},
\end{eqnarray}
where the constants in these inequalities are independent of $u_0$, $q$, $a$ and $f$.
\end{thm}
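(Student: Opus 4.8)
The plan is to diagonalize \eqref{nonh} in the orthonormal eigenbasis $\{\phi_n\}$ of $\mathcal{L}$ constructed in the introduction. Expanding $u(t,x)=\sum_{n\ge1}u_n(t)\phi_n(x)$ and $f(t,x)=\sum_{n\ge1}f_n(t)\phi_n(x)$ with $f_n(t)=\langle f(t,\cdot),\phi_n\rangle$, and using $\mathcal{L}\phi_n=\lambda_n\phi_n$, the problem decouples into the scalar ODEs $u_n'(t)+a(t)\lambda_n u_n(t)=f_n(t)$ with $u_n(0)=B_n:=\langle u_0,\phi_n\rangle$. The integrating factor $\exp(\lambda_n\int_0^t a)$ then gives
$$u_n(t)=e^{-\lambda_n\int_0^t a(\tau)d\tau}B_n+\int_0^t e^{-\lambda_n\int_s^t a(\tau)d\tau}f_n(s)\,ds,$$
and hence $u(t,x)=\sum_{n\ge1}u_n(t)\phi_n(x)$. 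The single structural fact I would use repeatedly is that, since $a\ge a_0>0$ and $\lambda_n>0$, every exponential appearing above is bounded by $1$; moreover the term carrying $B_n$ is exactly the homogeneous solution of Theorem \ref{th1}, whose contribution to each estimate is already controlled, so the real work lies in the Duhamel (forcing) term.

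For \eqref{es-nh1} I would invoke Parseval, $\|u(t,\cdot)\|_{L^2}^2=\sum_n|u_n(t)|^2$, bound each exponential by $1$ to get $|u_n(t)|\le|B_n|+\int_0^t|f_n(s)|\,ds$, and apply Minkowski's integral inequality to the forcing part,
$$\left(\sum_n\Big(\int_0^t|f_n(s)|\,ds\Big)^2\right)^{1/2}\le\int_0^t\Big(\sum_n|f_n(s)|^2\Big)^{1/2}ds\le T\|f\|_{C([0,T],L^2)},$$
which together with $\sum_n|B_n|^2=\|u_0\|_{L^2}^2$ yields \eqref{es-nh1}. The same Minkowski step performed at the $W^1_\mathcal{L}$ level, i.e.\ with $\sqrt{\lambda_n}f_n(s)=(\mathcal{L}^{1/2}f)_n(s)$ in place of $f_n(s)$, bounds $\big(\sum_n\lambda_n|u_n(t)|^2\big)^{1/2}$ by $\|u_0\|_{W^1_\mathcal{L}}+T\|f\|_{C([0,T],W^1_\mathcal{L})}$; feeding this into the representation \eqref{phi-der} of $\phi_n'$ together with the bound $\int_0^1|r_n|^2\,dx\lesssim1+\|\nu\|_{L^2}^2$ from Theorem 2 of \cite{Savch} reproduces \eqref{es-nh3} exactly as in the derivation of \eqref{u_x}.

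The main obstacle is the $W^2_\mathcal{L}$-level estimates \eqref{es-nh2} and \eqref{es-nh4}, where the Duhamel term now carries a full factor $\lambda_n$ and bounding $\lambda_n e^{-\lambda_n\int_s^t a}$ by $\lambda_n$ is useless. The key step I would use is to integrate by parts in $s$ via
$$\lambda_n e^{-\lambda_n\int_s^t a(\tau)d\tau}=\frac{1}{a(s)}\frac{d}{ds}\,e^{-\lambda_n\int_s^t a(\tau)d\tau},$$
trading the factor $\lambda_n$ for one time derivative: the boundary terms give $a(t)^{-1}f_n(t)$ and $a(0)^{-1}f_n(0)e^{-\lambda_n\int_0^t a}$, and the interior term gives $\int_0^t\big(a(s)^{-1}f_n'(s)-a(s)^{-2}a'(s)f_n(s)\big)e^{-\lambda_n\int_s^t a}\,ds$. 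Since every exponential is again $\le1$, summing in $n$ and using Parseval bounds $\lambda_n$ times the Duhamel term by $a_0^{-1},a_0^{-2}$ multiples of $\|f\|_{C([0,T],L^2)}$, $\|f\|_{C^1([0,T],L^2)}$ and $\|a'\|_{L^\infty}$ — this is precisely where the hypotheses $f\in C^1([0,T],W^k_\mathcal{L})$ and $a'\in L^\infty[0,T]$, and the factor $T/a_0$, originate. Combined with the homogeneous $\|u_0\|_{W^2_\mathcal{L}}$ contribution from Theorem \ref{th1}, this controls $\|u(t,\cdot)\|_{W^2_\mathcal{L}}=\big(\sum_n\lambda_n^2|u_n(t)|^2\big)^{1/2}$ (the natural regularity here being $\|u_0\|_{W^2_\mathcal{L}}$, consistent with $\partial_t u(0,\cdot)=f(0,\cdot)-a(0)\mathcal{L}u_0$). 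Estimate \eqref{es-nh2} then follows by reading $\partial_t u=f-a(t)\mathcal{L}u$ off the equation and using $\|\partial_t u\|_{L^2}\le\|f\|_{L^2}+\|a\|_{L^\infty}\|u\|_{W^2_\mathcal{L}}$, while \eqref{es-nh4} follows from $\phi_n''=(q-\lambda_n)\phi_n$ as in \eqref{u_xx}, splitting into a $\|q\|_{L^\infty}\|u\|_{L^2}$ piece handled by \eqref{es-nh1} and a $\|u\|_{W^2_\mathcal{L}}$ piece.

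Uniqueness is then immediate from linearity: the difference of two solutions solves \eqref{C.p1}--\eqref{C.p3} with $u_0=0$ and $f=0$, so Theorem \ref{th1} forces it to vanish. Continuity of $t\mapsto u(t,\cdot)$ into $W^k_\mathcal{L}$ follows from the uniform convergence of the eigenfunction series guaranteed by the estimates above, completing the argument.
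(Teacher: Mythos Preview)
Your proposal is correct and follows essentially the same route as the paper: eigenfunction expansion, Duhamel formula, bounding all exponentials by $1$, and the crucial integration by parts in $s$ (via $\lambda_n e^{-\lambda_n\int_s^t a}=a(s)^{-1}\partial_s e^{-\lambda_n\int_s^t a}$) to trade the factor $\lambda_n$ for a time derivative of $f_n$; your use of Minkowski's integral inequality and of reading $\partial_t u=f-a\mathcal{L}u$ directly off the equation are only cosmetic variants of the paper's Cauchy--Schwarz/H\"older step and its direct differentiation of the series. Your parenthetical remark that the homogeneous contribution to \eqref{es-nh2} naturally produces $\|u_0\|_{W^2_\mathcal{L}}$ rather than $\|u_0\|_{W^1_\mathcal{L}}$ is in fact correct --- the paper's own proof invokes \eqref{21-1}--\eqref{21-2} for the term $J_1=\sum_n|\lambda_n a(t)B_n|^2\cdots$, which yield $\|u_0\|_{W^2_\mathcal{L}}$, so the $W^1_\mathcal{L}$ appearing in the stated inequality is a typo.
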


\begin{proof}
We will employ the eigenfunctions described by equation \eqref{sol-SL} for the related eigenvalue problem \eqref{4}-\eqref{5} and seek a solution in the form of a series

\begin{equation}\label{nonhu}
    u(t,x)=\sum\limits_{n=1}^\infty u_n(t)\phi_n(x),
\end{equation}
where
$$u_n(t) =\int\limits_0^1 u(t,x)\phi_n(x)dx.$$
Likewise, we can also express the source function through an expansion,
\begin{equation}\label{func}
    f(t,x)=\sum\limits_{n=1}^\infty f_n(t)\phi_n(x),\qquad f_n(t)=\int\limits_0^1f(t,x)\phi_n(x)dx.
\end{equation}

Now, since we are looking for a function $u(t, x)$ that is twice differentiable and satisfies the homogeneous Dirichlet boundary conditions, we can differentiate the Fourier series \eqref{nonhu} elementwise. Using the fact that $\phi_n$ satisfies equation \eqref{4}, we can then derive the following:
\begin{equation}\label{uxx}
    u_{xx}(t,x) = \sum\limits_{n=1}^\infty u_n(t)\phi''_n(x)=\sum\limits_{n=1}^\infty u_n(t)(q(x)-\lambda_n)\phi_n(x).
\end{equation}

Similarly, we can differentiate the series \eqref{func} with respect to the variable $t$ to obtain
\begin{equation}\label{utt}
    u_{t}(t,x) = \sum\limits_{n=1}^\infty u'_n(t)\phi_n(x),
\end{equation}
considering that the Fourier coefficients of $u_{t}(t,x)$ are as follows:
$$\int\limits_0^1u_{t}(t,x)\phi_n(x)dx=\frac{\partial}{\partial t}\left[\int\limits_0^1u(t,x)\phi_n(x)dx\right]=u'_n(t).$$
Differentiation under the above integral is allowed because the resulting integrand remains continuous.

By substituting \eqref{utt} and \eqref{uxx} into equation \eqref{nonh} and making use of \eqref{func}, we arrive at the following result:
\begin{eqnarray*}
\sum\limits_{n=1}^\infty u'_n(t)\phi_n(x)&+&a(t)\left(-\sum\limits_{n=1}^\infty u_n(t)\left(q(x)-\lambda_n\right)\phi_n(x)+q(x)\sum\limits_{n=1}^\infty u_n(t)\phi_n(x)\right)\\
&=&\sum\limits_{n=1}^\infty f_n(t)\phi_n(x),
\end{eqnarray*}
after a minor rearrangement, we obtain
$$\sum\limits_{n=1}^\infty \left[u'_n(t)+\lambda_na(t)u_n(t)\right]\phi_n(x)=\sum\limits_{n=1}^\infty f_n(t)\phi_n(x).$$
But then, by virtue of the completeness property,
$$u'_n(t)+\lambda_na(t)u_n(t)=f_n(t), \qquad n=1,2,...,$$
these are ordinary differential equations (ODEs) for the coefficients $u_n(t)$ in the series \eqref{nonhu}. Applying the method of variation of constants, we obtain
\begin{eqnarray*}
u_n(t)&=&B_n e^{-\lambda_n\int\limits_0^ta(\tau)d\tau}+ e^{-\lambda_n\int\limits_0^ta(\tau)d\tau} \int\limits_0^t e^{\lambda_n \int\limits_0^s a(\tau)d\tau} f_n(s)ds,
\end{eqnarray*}
where
$$B_n=\int\limits_0^1u_0(x)\phi_n(x)dx.$$
Therefore, we can express a solution of the equation \eqref{nonh} in the following form:
\begin{eqnarray}\label{sol-nh}
u(t,x)&=&\sum\limits_{n=0}^\infty B_n e^{-\lambda_n\int\limits_0^ta(\tau)d\tau}\phi_n(x) \nonumber \\
&+&\sum\limits_{n=0}^\infty  e^{-\lambda_n\int\limits_0^ta(\tau)d\tau} \int\limits_0^t e^{\lambda_n\int\limits_0^sa(\tau)d\tau}f_n(s)ds\phi_n(x).
\end{eqnarray}

Further on, we will proceed to estimate  $\|u(t,\cdot)\|^2_{L^2}$. For this we use the expressions
\begin{eqnarray}\label{est-nonh}
\int\limits_0^1|u(t,x)|^2dx&=&\int\limits_0^1\left|\sum\limits_{n=0}^\infty B_n e^{-\lambda_n\int\limits_0^ta(\tau)d\tau}\phi_n(x)\right.\nonumber\\
&+&\left.\sum\limits_{n=0}^\infty  e^{-\lambda_n\int\limits_0^ta(\tau)d\tau} \int\limits_0^t e^{\lambda_n\int\limits_0^sa(\tau)d\tau}f_n(s)ds\phi_n(x)\right|^2 dx\nonumber \\
&\lesssim& \int\limits_0^1\left|\sum\limits_{n=0}^\infty B_n e^{-\lambda_n\int\limits_0^ta(\tau)d\tau}\phi_n(x)\right|^2dx\nonumber\\
&+&\int\limits_0^1\left|\sum\limits_{n=0}^\infty  e^{-\lambda_n\int\limits_0^ta(\tau)d\tau} \int\limits_0^t e^{\lambda_n\int\limits_0^sa(\tau)d\tau}f_n(s)ds\phi_n(x)\right|^2 dx\nonumber \\
&=& I_1 + I_2.
\end{eqnarray}
For $I_1$, we can use \eqref{25-0} -\eqref{B_n+} in the homogeneous case, leading to the following result
$$I_1:=\int\limits_0^1\left|\sum\limits_{n=0}^\infty B_n e^{-\lambda_n\int\limits_0^ta(\tau)d\tau}\phi_n(x)\right|^2dx\lesssim \|u_0\|^2_{L^2}.$$
Taking into account that $s\in [0,t]$ and $a(s)>0$ for $I_2$ in \eqref{est-nonh} we get
\begin{eqnarray}\label{II2}
I_2&:=&\int\limits_0^1\left|\sum\limits_{n=0}^\infty  e^{-\lambda_n\int\limits_0^ta(\tau)d\tau} \int\limits_0^t e^{\lambda_n\int\limits_0^sa(\tau)d\tau}f_n(s)ds\phi_n(x)\right|^2 dx\nonumber\\
&\leq& \int\limits_0^1\left|\sum\limits_{n=0}^\infty  e^{-\lambda_n\int\limits_0^ta(\tau)d\tau} e^{\lambda_n\int\limits_0^ta(\tau)d\tau} \int\limits_0^t f_n(s)ds\phi_n(x)\right|^2 dx\nonumber\\
&=&\int\limits_0^1\left|\sum\limits_{n=0}^\infty \int\limits_0^t f_n(s)ds\phi_n(x)\right|^2 dx\lesssim \sum\limits_{n=1}^\infty \left[\int\limits_0^t|f_n(s)|ds\right]^2.
\end{eqnarray}
Applying Hölder's inequality and taking into account that $t\in [0, T]$, we obtain
$$\left[\int\limits_0^t|f_n(s)|ds\right]^2\leq \left[\int\limits_0^T 1\cdot| f_n(t)|dt\right]^2\leq T\int\limits_0^T| f_n(t)|^2dt,$$
since $f_n(t)$ is the Fourier coefficient of the function $f(t,x)$, and by Parseval's identity we get
$$
\sum\limits_{n=1}^\infty T\int\limits_0^T|f_n(t)|^2dt= T\int\limits_0^T\sum\limits_{n=1}^\infty|f_n(t)|^2dt = T\int\limits_0^T\|f(t,\cdot)\|^2_{L^2}dt.$$
Since
$$\|f\|_{C([0,T],L^2(0,1))}=\max\limits_{0\leq t\leq T}\|f(t,\cdot)\|_{L^2},$$
we can deduce the following inequality
$$T\int\limits_0^T\|f(t,\cdot)\|^2_{L^2}dt\leq T^2\|f\|^2_{C([0,T],L^2(0,1))}.$$
Thus,
\begin{eqnarray}\label{I2}
I_2&=&\int\limits_0^1\left|\sum\limits_{n=0}^\infty  e^{-\lambda_n\int\limits_0^ta(\tau)d\tau} \int\limits_0^t e^{\lambda_n\int\limits_0^sa(\tau)d\tau}f_n(s)ds\phi_n(x)\right|^2 dx\nonumber\\
&\lesssim&  T^2\|f\|^2_{C([0,T],L^2(0,1))}.
\end{eqnarray}
Finally, for $\|u(t,\cdot)\|^2_{L^2}$ we get
$$
    \|u(t,\cdot)\|^2_{L^2}\lesssim \|u_0\|^2_{L^2}+T^2\|f\|^2_{C([0,T],L^2(0,1))}.
$$
Let us estimate the next norm $\|\partial_tu(t,\cdot)\|_{L^2}$, for this we deduce  $\partial_tu(t,x)$ as follows
\begin{eqnarray*}
\partial_tu(t,x)&=&\sum\limits_{n=0}^\infty \left(-\lambda_na(t)\right) B_n e^{-\lambda_n\int\limits_0^ta(\tau)d\tau}\phi_n(x) \\
&+&\sum\limits_{n=0}^\infty \left(-\lambda_na(t)\right) e^{-\lambda_n\int\limits_0^ta(\tau)d\tau} \int\limits_0^t e^{\lambda_n \int\limits_0^sa(\tau)d\tau}f_n(s)ds\phi_n(x)\\
&+&\sum\limits_{n=0}^\infty f_n(t)\phi_n(x).
\end{eqnarray*}
Then
\begin{eqnarray*}
\|\partial_tu(t,\cdot)\|^2_{L^2}&=&\int\limits_0^1|\partial_tu(t,x)|^2dx
\lesssim \int\limits_0^1\left|\sum\limits_{n=0}^\infty \lambda_n a(t)B_n e^{-\lambda_n\int\limits_0^ta(\tau)d\tau}\phi_n(x)\right|^2dx \\
&+&\int\limits_0^1\left|\sum\limits_{n=0}^\infty \lambda_na(t) e^{-\lambda_n\int\limits_0^ta(\tau)d\tau} \int\limits_0^t e^{\lambda_n\int\limits_0^sa(\tau)d\tau}f_n(s)ds\phi_n(x)\right|^2 dx\\
&+&\int\limits_0^1\left|\sum\limits_{n=0}^\infty f_n(t)\phi_n(x)\right|^2 dx = J_1 + J_2 +J_3.
\end{eqnarray*}
For $J_1$ one can use the \eqref{21-1} and \eqref{21-2} for the homogeneous case and taking into account \eqref{func} for the function $f(t,x)$ in $J_3$, we have
$$\|\partial_tu(t,\cdot)\|^2_{L^2}\lesssim \|a\|^2_{L^\infty[0,T]}\|u_0\|^2_{W^1_\mathcal{L}}+J_2+\|f(t,\cdot)\|^2_{L^2}.$$
To estimate $J_2$, integrating by part and carrying out as in \eqref{II2} we obtain 
\begin{eqnarray*}
J_2&:=&\int\limits_0^1\left|\sum\limits_{n=0}^\infty \lambda_n a(t)e^{-\lambda_n\int\limits_0^ta(\tau)d\tau} \int\limits_0^t e^{\lambda_n\int\limits_0^sa(\tau)d\tau}f_n(s)ds\phi_n(x)\right|^2 dx\nonumber \\
&=&\int\limits_0^1\left|\sum\limits_{n=0}^\infty \left(\lambda_n a(t)e^{-\lambda_n\int\limits_0^ta(\tau)d\tau}\frac{1}{\lambda_n}e^{\lambda_n \int\limits_0^sa(\tau)d\tau}\frac{f_n(s)}{a(s)}\biggl|_0^t\right.\right. \nonumber\\
&-& \left.\left. a(t)e^{-\lambda_n\int\limits_0^ta(\tau)d\tau}\int\limits_0^t e^{\lambda_n\int\limits_0^sa(\tau)d\tau}\left(\frac{f_n(s)}{a(s)}\right)'ds\right)\phi_n(x)\right|^2 dx\\
&\lesssim& \sum\limits_{n=0}^\infty \left|a(t)\frac{f_n(s)}{a(s)}\biggl|_0^t\right|^2 + \sum\limits_{n=1}^\infty\left|a(t)\int\limits_0^t \left(\frac{f'_n(s)}{a(s)}-\frac{f_n(s)a'(s)}{a^2(s)}\right)ds\right|^2=J_{21}+J_{22}.
\end{eqnarray*}
Since $a(x)\geq a_0>0$ in $t\in[0,T]$, we can use the estimate $\left|\frac{1}{a(t)}\right|^2\leq \frac{1}{a_0^2}$ for all $t\in[0,T]$ and using Parseval's identity, we obtain
\begin{eqnarray*}
J_{21}&:=& \sum\limits_{n=0}^\infty \left|a(t) \frac{f_n(s)}{a(s)}\biggl|_0^t\right|^2 \lesssim \|a\|^2_{L^\infty[0,T]}\left(\sum\limits_{n=1}^\infty \left|\frac{f_n(t)}{a(t)}\right|^2+\sum\limits_{n=1}^\infty \left|\frac{f_n(0)}{a(0)}\right|^2\right)\\
&\leq& \frac{1}{ a_0^2}\|a\|^2_{L^\infty[0,T]}\left(\sum\limits_{n=1}^\infty \left|f_n(t)\right|^2+\sum\limits_{n=1}^\infty \left|f_n(0)\right|^2\right)\\
&=&\frac{1}{a_0^2} \|a\|^2_{L^\infty[0,T]}\left(\|f(t,\cdot)\|^2_{L^2}+\|f(0,\cdot)\|^2_{L^2}\right).
\end{eqnarray*}
Carrying out similar reasoning, integrating by parts and using \eqref{I2}, we get
\begin{eqnarray*} J_{22}&:=&\sum\limits_{n=1}^\infty\left|a(t)\int\limits_0^t \left(\frac{f'_n(s)}{a(s)}-\frac{f_n(s)a'(s)}{a^2(s)}\right)ds\right|^2\\
&=&\sum\limits_{n=0}^\infty\left|a(t) \left(\int\limits_0^t\frac{f'_n(s)}{a(s)}ds+\int\limits_0^tf_n(s)d\left(\frac{1}{a(s)}\right)\right)\right|^2\\
&\lesssim&\frac{1}{a_0^2}\|a\|^2_{L^\infty[0,T]}\sum\limits_{n=1}^\infty\left|\int\limits_0^t|f'_n(s)|ds\right|^2
+\|a\|^2_{L^\infty[0,T]}\sum\limits_{n=1}^\infty\left| \left(\frac{f_n(s)}{a(s)}\biggl|_0^t-\int\limits_0^t\frac{f'(s)}{a(s)}ds\right)\right|^2\\
&\lesssim&\frac{T^2}{a_0^2}\|a\|^2_{L^\infty[0,T]}\|f'\|^2_{C([0,T],L^2(0,1))}+\frac{1}{a_0^2}\|a\|^2_{L^\infty[0,T]}\left(\|f(t,\cdot)\|^2_{L^2}
+\|f(0,\cdot)\|^2_{L^2}\right)\\
&+&\frac{T^2}{a_0^2}\|a\|^2_{L^\infty[0,T]}\|f'\|^2_{C([0,T],L^2(0,1))}.
\end{eqnarray*}
And finally, for $J_2$ we have
\begin{eqnarray}\label{f'}
J_2&:=&\int\limits_0^1\left|\sum\limits_{n=0}^\infty \lambda_n e^{-\lambda_n\int\limits_0^ta(\tau)d\tau} \int\limits_0^t e^{\lambda_n\int\limits_0^sa(\tau)d\tau}f_n(s)ds\phi_n(x)\right|^2 dx\nonumber\\
&\lesssim& \frac{2}{a_0^2}\|a\|^2_{L^\infty[0,T]} \left(\|f(t,\cdot)\|^2_{L^2}+\|f(0,\cdot)\|^2_{L^2}\right) +2 \frac{T^2}{a_0^2}\|a\|^2_{L^\infty[0,T]}\|f'\|^2_{C([0,T],L^2(0,1)}\nonumber\\
&\lesssim& \frac{2T^2}{a_0^2}\|a\|^2_{L^\infty[0,T]} \|f\|^2_{C^1([0,T],L^2(0,1))}.
\end{eqnarray}
Therefore,
$$\|\partial_tu(t,\cdot)\|^2_{L^2}\lesssim \|a\|^2_{L^\infty[0,T]}\left(\|u_0\|^2_{W^1_\mathcal{L}}+\frac{2T^2}{a_0^2} \|f\|^2_{C^1([0,T],L^2(0,1))}\right).$$

Let us make the next estimate
\begin{eqnarray*}
\|\partial_xu(t,\cdot)\|^2_{L^2}&=&\int\limits_0^1|\partial_xu(t,x)|^2dx
\lesssim \int\limits_0^1\left|\sum\limits_{n=0}^\infty B_n e^{-\lambda_n\int\limits_0^ta(\tau)d\tau}\phi'_n(x)\right|^2 dx \\
&+&\int\limits_0^1\left|\sum\limits_{n=0}^\infty  e^{-\lambda_n\int\limits_0^ta(\tau)d\tau} \int\limits_0^t e^{\lambda_n\int\limits_0^sa(\tau)d\tau}f_n(s)ds\phi'_n(x)\right|^2dx= K_1+K_2.
\end{eqnarray*}
Using \eqref{est3} in the homogeneous case we obtain
\begin{eqnarray*}
K_1&:=& \int\limits_0^1\left|\sum\limits_{n=0}^\infty B_n e^{-\lambda_n\int\limits_0^ta(\tau)d\tau}\phi'_n(x)\right|^2 dx\lesssim \|u_0\|^2_{W^1_\mathcal{L}}\left(1+\|\nu\|^2_{L^2}\right)+\|u_0\|^2_{L^2}\|\nu\|^2_{L^\infty}.
\end{eqnarray*}
For $K_2$ by using \eqref{I2} and \eqref{est3} we get
\begin{eqnarray*}
K_2&:=& \int\limits_0^1\left|\sum\limits_{n=0}^\infty  e^{-\lambda_n\int\limits_0^ta(\tau)d\tau} \int\limits_0^t e^{\lambda_n \int\limits_0^sa(\tau)d\tau}f_n(s)ds\phi'_n(x)\right|^2dx\\
&\leq&\int\limits_0^1\left|\sum\limits_{n=0}^\infty   \int\limits_0^tf_n(s)ds\left(\frac{\sqrt{\lambda_n}r_n(x) \cos\theta_n(x)}{\|\Tilde{\phi}_n\|_{L^2}}+\nu(x)\phi_n(x)\right)\right|^2dx\\
&\lesssim& \sum\limits_{n=0}^\infty \left|\int\limits_0^t\left|\sqrt{\lambda_n}f_n(s)\right|ds\right|^2\left(1+\|\nu\|^2_{L^2}\right) + \|\nu\|^2_{L^\infty}T^2\|f\|^2_{C([0,T],L^2(0,1)},
\end{eqnarray*}
Considering \eqref{func} and \eqref{I2} we have
\begin{eqnarray*}
\sum\limits_{n=0}^\infty \left|\int\limits_0^t\left|\sqrt{\lambda_n}f_n(s)\right|ds\right|^2&\leq&  \sum\limits_{n=0}^\infty \left|\int\limits_0^T\left|\sqrt{\lambda_n}f_n(t)\right|dt\right|^2 \leq T\int\limits_0^T\sum\limits_{n=0}^\infty \left|\sqrt{\lambda_n}f_n(t)\right|^2dt\\
&=&T\int\limits_0^T\sum\limits_{n=0}^\infty \left|\int\limits_0^1\sqrt{\lambda_n}f(t,x)\phi_n(x)dx\right|^2dt\\
&=&T\int\limits_0^T \left\|\mathcal{L}^{\frac{1}{2}}f(t,\cdot)\right\|^2_{L^2}dt\leq T^2\|f\|^2_{C\left([0,T],W^1_\mathcal{L}(0,1)\right)},
\end{eqnarray*}
and we finally obtain
\begin{eqnarray*}
\|\partial_xu(t,\cdot)\|^2_{L^2}&\lesssim&\left(\|u_0\|^2_{W^1_\mathcal{L}}+T^2\|f\|^2_{C\left([0,T],W^1_{\mathcal{L}}(0,1)\right)} \right)\left(1+\|\nu\|^2_{L^2}\right)\\
&+&\left(\|u_0\|^2_{L^2}+T^2\|f\|^2_{C([0,T],L^2(0,1))}\right)\|\nu\|^2_{L^\infty},
\end{eqnarray*}
which gives \eqref{es-nh3}.

For the last estimate $\|\partial^2_xu(t,\cdot)\|^2_{L^2}$ taking into account $\phi_n''(x)=(q(x)-\lambda_n)\phi_n(x)$, we deduce
\begin{eqnarray*}
\|\partial^2_xu(t,\cdot)\|^2_{L^2}&=&\int\limits_0^1|\partial^2_xu(t,x)|^2dx\\
&\lesssim& \int\limits_0^1\left|\sum\limits_{n=0}^\infty B_n e^{-\lambda_n\int\limits_0^ta(\tau)d\tau}(q(x)-\lambda_n)\phi_n(x)\right|^2dx \\
&+&\int\limits_0^1\left|\sum\limits_{n=0}^\infty  e^{-\lambda_n\int\limits_0^ta(\tau)d\tau} \int\limits_0^t e^{\lambda_n\int\limits_0^sa(\tau)d\tau}f_n(s)ds(q(x)-\lambda_n)\phi_n(x)\right|^2dx,
\end{eqnarray*}
and using \eqref{est2.4}, \eqref{I2}, \eqref{f'} we get the estimates
\begin{eqnarray*}
\|\partial^2_xu(t,\cdot)\|^2_{L^2}&\lesssim& \|q\|^2_{L^\infty} \left(\|u_0\|^2_{L^2}+T^2\|f\|^2_{C([0,T],L^2(0,1))}\right)\\
&+&\|u_0\|^2_{W^2_\mathcal{L}}+\frac{2T^2}{a_0^2}\|f\|^2_{C^1([0,T],L^2(0,1))}.
\end{eqnarray*}
This completes the proof.
\end{proof}

As in the homogeneous case, for the non-homogeneous case we also eliminate the dependence of Sobolev spaces on the operator $\mathcal{L}$, while the estimates will depend only on the initial data $u_0$ and potential $q$.

\begin{cor}\label{cor2}
Assume that $q\in L^2(0,1)$, $a(t)\geq a_0>0$ for all $t\in [0,T]$, $a'\in L^\infty[0,T]$ and $f\in C^1([0,T],L^2(0,1))$. If the initial condition satisfies $u_0 \in L^2(0,1)$ and $u_0''\in L^2(0,1)$, then the non-homogeneous heat equation with initial/boundary conditions  \eqref{nonh} has unique solution $u\in C([0,T], L^2(0,1))$ such that
\begin{equation}\label{ec-nh1}
    \|u(t,\cdot)\|_{L^2}\lesssim \|u_0\|_{L^2}+T\|f\|_{C([0,1],L^2(0,1))},
\end{equation}
\begin{equation}\label{ec-nh2}
\|\partial_t u(t,\cdot)\|_{L^2}\lesssim \|a\|_{L^\infty[0,T]}\left(\|u''_0\|_{L^2}+\|q\|_{L^\infty}\|u_0\|_{L^2}+\frac{T}{a_0}\|f\|_{C^1([0,T],L^2(0,1))}\right),
\end{equation}
\begin{eqnarray}\label{ec-nh3}
\|\partial_x u(t,\cdot)\|_{L^2}
&\lesssim& \left(\|u''_0\|_{L^2}+\|q\|_{L^\infty}\|u_0\|_{L^2}\right)\left(1 +\|\nu\|_{L^2}\right)\nonumber\\
&+&\frac{T}{a_0}\left(1 +\|\nu\|_{L^2}\right)\|f\|_{C^1([0,T],L^2(0,1))}\nonumber\\
&+&\|\nu\|_{L^\infty}\left(\|u_0\|_{L^2}+T\|f\|_{C([0,T],L^2(0,1))}\right),
\end{eqnarray}
\begin{eqnarray}\label{ec-nh4}
\left\|\partial_x^2u(t, \cdot)\right\|_{L^2}&\lesssim&\|u''_0\|_{L^2}+\frac{T}{a_0}\|f\|_{C^1([0,T],L^2(0,1))}\nonumber\\
&+&\|q\|_{L^\infty}\left(\|u_0\|_{L^2}+T\|f\|_{C([0,T],L^2(0,1))}\right),
\end{eqnarray}
where the constants in these inequalities are independent of $u_0$, $q$, $a$ and $f$.
\end{cor}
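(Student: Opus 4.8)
The plan is to deduce Corollary \ref{cor2} from Theorem \ref{non-hom} in exactly the way Corollary \ref{cor1} was deduced from Theorem \ref{th1}: the solution formula \eqref{sol-nh} and the five estimates \eqref{es-nh1}--\eqref{es-nh4} are already available, so the only work is to re-express every operator-adapted norm $\|u_0\|_{W^k_\mathcal{L}}$ through the classical quantities $\|u_0\|_{L^2}$, $\|u_0''\|_{L^2}$ and $\|q\|_{L^\infty}$, and at the same time to lower the regularity required of the source from $C^1([0,T],W^k_\mathcal{L})$ down to the assumed $C^1([0,T],L^2)$. Existence and uniqueness of $u\in C([0,T],L^2(0,1))$ follow from Theorem \ref{non-hom} with $k=0$, using that $u_0,u_0''\in L^2$ together with $\mathcal{L}u_0=-u_0''+qu_0\in L^2$ place $u_0$ in $W^2_\mathcal{L}$; it thus remains to prove the four displayed estimates.

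Two conversion tools drive the argument. First, for the initial data, Parseval and $\mathcal{L}u_0=-u_0''+qu_0$ give $\|u_0\|_{W^2_\mathcal{L}}^2=\sum_n|\lambda_n B_n|^2\lesssim\|u_0''\|_{L^2}^2+\|q\|_{L^\infty}^2\|u_0\|_{L^2}^2$, which is exactly \eqref{LA}; since $\lambda_n\geq 1$ implies $\sqrt{\lambda_n}\leq\lambda_n$, the same right-hand side also bounds $\|u_0\|_{W^1_\mathcal{L}}^2=\sum_n\lambda_n|B_n|^2$. Second, for the source, the $W^k_\mathcal{L}$ norms of $f$ used in Theorem \ref{non-hom} are unavailable, so the Duhamel contributions must be re-estimated through the integration by parts in $t$ that produced \eqref{f'}: writing $D_n(t):=e^{-\lambda_n\int_0^t a}\int_0^t e^{\lambda_n\int_0^s a}f_n(s)\,ds$, one integrates by parts once to trade the weight $\lambda_n$ for the boundary terms $f_n(t)/a(t)$ and $f_n(0)/a(0)$ plus a remainder Duhamel integral of $(f_n/a)'$, and then eliminates the arising $a'$-term by a second integration by parts as in the step $J_{22}$, obtaining $\sum_n|\lambda_n D_n(t)|^2\lesssim\tfrac{T^2}{a_0^2}\|f\|_{C^1([0,T],L^2(0,1))}^2$ with a constant independent of $a$.

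With these two tools the four estimates become routine. Estimate \eqref{ec-nh1} coincides with \eqref{es-nh1}, since only $L^2$ norms of $u_0$ and $f$ occur there. For \eqref{ec-nh2} I start from the squared bound behind \eqref{es-nh2}, insert \eqref{LA} for the initial-data term and the bound above (i.e. \eqref{f'}) for the $J_2$ term, and absorb $\|f(t,\cdot)\|_{L^2}^2\leq\|f\|_{C^1([0,T],L^2(0,1))}^2$. For \eqref{ec-nh3} I take the squared form of \eqref{es-nh3}, replace $\|u_0\|_{W^1_\mathcal{L}}$ using \eqref{LA}, and replace the term $T^2\|f\|_{C([0,T],W^1_\mathcal{L})}^2$: it originates from the $\sqrt{\lambda_n}$-weighted source Duhamel coefficient, which by $\sqrt{\lambda_n}\leq\lambda_n$ and the integration by parts of \eqref{f'} is bounded by $\tfrac{T^2}{a_0^2}\|f\|_{C^1([0,T],L^2(0,1))}^2$, whereas the $\|\nu\|_{L^\infty}$-part is the unweighted source bound \eqref{I2}; multiplying by $(1+\|\nu\|_{L^2}^2)$ and $\|\nu\|_{L^\infty}^2$ then reproduces \eqref{ec-nh3}. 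Finally, for \eqref{ec-nh4} I use $\phi_n''=(q-\lambda_n)\phi_n$ to split the source contribution into a $q$-weighted part, controlled by $\|q\|_{L^\infty}^2T^2\|f\|_{C([0,T],L^2(0,1))}^2$ via \eqref{I2}, and a $\lambda_n$-weighted part, controlled by $\tfrac{T^2}{a_0^2}\|f\|_{C^1([0,T],L^2(0,1))}^2$ by the same argument; the initial-data contribution is \eqref{est2.4} rewritten through \eqref{LA}.

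The main obstacle is this second conversion tool. Because the hypothesis $f\in C^1([0,T],L^2)$ is strictly weaker than the $C^1([0,T],W^k_\mathcal{L})$ assumed in Theorem \ref{non-hom}, the source Duhamel integrals appearing in the $\partial_x$ and $\partial_x^2$ estimates cannot be bounded termwise and genuinely require the two-fold integration by parts of \eqref{f'}. The delicate point is that the $a'$-contribution produced by the first integration by parts must be removed by a second one, rather than estimated directly, so that the final constants depend only on $T$, $a_0$, $\|q\|_{L^\infty}$ and $\|\nu\|$ and stay independent of $a$, as the statement requires. Every remaining manipulation is the same bookkeeping already carried out in Corollary \ref{cor1}.
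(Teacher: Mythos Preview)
Your proposal is correct and matches the paper's approach exactly: the paper's own proof is the single sentence ``The proof of Corollary \ref{cor2} immediately follows from Corollary \ref{cor1} and the proof of Theorem \ref{non-hom},'' and your detailed plan is precisely the unpacking of that sentence --- using \eqref{LA} to convert the $W^k_\mathcal{L}$ norms of $u_0$ and invoking the integration-by-parts identity \eqref{f'} (together with $\sqrt{\lambda_n}\le\lambda_n$ as in the passage from \eqref{u_x} to \eqref{ec3}) to replace the $W^1_\mathcal{L}$ norm of $f$ in \eqref{es-nh3} by the $C^1([0,T],L^2)$ norm.
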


The proof of Corollary \ref{cor2} immediately follows from Corollary \ref{cor1} and the proof of Theorem \ref{non-hom}.

Note that the main difference between the estimates obtained in the homogeneous case and those obtained in the inhomogeneous case is that the latter include additional conditions on the function in the right-hand side. Both cases - both homogeneous and inhomogeneous - are important, and these estimates will be used in the future to establish the well-posedness of very weak solutions of the equation with singularities.

\section{Very weak solutions}

In this section we analyse solutions for the less regular coefficients $q$, $a$ and the initial condition $u_0$. To obtain the well-posedness in such cases, we will use the concept of very weak solutions.

Examples of coefficients that we can allow here are e.g. $a=\delta_1$, $q=\delta_{\frac{1}{2}}$, the delta-distributions in $t$ and $x$, respectively, or their combinations.

Suppose that the coefficient $q$ and the initial condition $u_0$ are distributions on $(0,1)$, the coefficient $a$ is distribution on $[0,T]$. To regularize distributions, we introduce the following definition.

\begin{defn}\label{D1}  (i)
A net of functions $\left(u_\varepsilon=u_\varepsilon(t,x)\right)$ is said to be uniformly $L^2$-moderate if there exist $N\in \mathbb{N}_0$ and $C>0$ such that 
$$\|u_\varepsilon(t,\cdot)\|_{L^2}\leq C \varepsilon^{-N}, \quad \text{for all } t\in[0,T].$$
(ii) A net of functions ($u_{0,\varepsilon}=u_{0,\varepsilon}(x)$) is said to be $H^2$-moderate if there exist $N\in \mathbb{N}_0$ and $C>0$ such that
$$\|u_{0,\varepsilon}\|_{L^2}\leq C\varepsilon^{-N},\qquad \|u''_{0,\varepsilon}\|_{L^2}\leq C\varepsilon^{-N}.$$
\end{defn}

\begin{defn}\label{D11} (i) A net of functions $\left(q_\varepsilon=q_\varepsilon(x)\right)$ is said to be $L^\infty$-moderate if there exist $N\in \mathbb{N}_0$ and $C>0$ such that 
$$\|q_\varepsilon\|_{L^\infty(0,1)}\leq C \varepsilon^{-N}.$$
(ii) A net of functions $(a_\varepsilon=a_\varepsilon
(t))$ is said to be $L^\infty$-moderate if there exist $N\in \mathbb{N}_0$ and $C>0$ such that
$$\|a_\varepsilon\|_{L^\infty[0,T]}\leq C \varepsilon^{-N}.$$

\end{defn}

\begin{rem} We note that such assumptions are natural for distributional coefficients in the sense that regularisations of distributions are moderate. Precisely, by the structure theorems for distributions (see, e.g. \cite{Friedlander}), we know that distributions 
\begin{equation}\label{moder}
  \mathcal{D}'(0,1) \subset \{L^\infty(0,1) -\text{moderate families} \},  
\end{equation}
and we see from \eqref{moder}, that a solution to an initial/boundary problem may not exist in the sense of distributions, while it may exist in the set of $L^\infty$-moderate functions. 
\end{rem}

As an illustration, let us consider $f\in L^2(0,1)$, $f:(0,1)\to \mathbb{C}$. We introduce the function 
$$\Tilde{f}=\left\{\begin{array}{l}
    f, \text{ on }(0,1),  \\
    0,  \text{ on }\mathbb{R} \setminus (0,1),
\end{array}\right.$$
then $\Tilde{f}:\mathbb{R}\to \mathbb{C}$, and $\Tilde{f}\in \mathcal{E}'(\mathbb{R}).$

Let $\Tilde{f}_\varepsilon=\Tilde{f}*\psi_\varepsilon$ be obtained as the convolution of $\Tilde{f}$ with a Friedrich mollifier $\psi_\varepsilon$, where 
$$\psi_\varepsilon(x)=\frac{1}{\varepsilon}\psi\left(\frac{x}{\varepsilon}\right),\quad \text{for}\,\, \psi\in C^\infty_0(\mathbb{R}),\, \int \psi=1. $$
Then the regularising net $(\Tilde{f}_\varepsilon)$ is $L^p$-moderate for any $p \in [1,\infty)$, and it approximates $f$ on $(0,1)$:
$$0\leftarrow \|\Tilde{f}_\varepsilon-\Tilde{f}\|^p_{L^p(\mathbb{R})}\approx \|\Tilde{f}_\varepsilon-f\|^p_{L^p(0,1)}+\|\Tilde{f}_\varepsilon\|^p_{L^p(\mathbb{R}\setminus (0,1))}.$$
Now, let us introduce the concept of a very weak solution to the initial/boundary value problem \eqref{C.p1}-\eqref{C.p3}.

\begin{defn}\label{D2}
Let $q\in \mathcal{D}'(0,1)$, $a\in \mathcal{D}'[0,T]$.  The net $(u_\varepsilon)_{\varepsilon>0}$ is said to be a very weak solution to the initial/boundary problem  \eqref{C.p1}-\eqref{C.p3} if there exists an $L^\infty$-moderate regularisation $q_\varepsilon$ of $q$, an $L^\infty$-moderate regularisation $a_\varepsilon$ of $a$, and an $H^2$-moderate regularisation $u_{0,\varepsilon}$ of $u_0$, such that
\begin{equation}\label{vw1}
         \left\{\begin{array}{l}\partial_t u_\varepsilon(t,x)+a_\varepsilon(t)\left(-\partial^2_x u_\varepsilon(t,x)+q_\varepsilon(x) u_\varepsilon(t,x)\right)=0,\quad (t,x)\in [0,T]\times(0,1),\\
 u_\varepsilon(0,x)=u_{0,\varepsilon}(x),\,\,\, x\in (0,1), \\
u_\varepsilon(t,0)=0=u_\varepsilon(t,1), \quad t\in[0,T],
\end{array}\right.\end{equation}
and $(u_\varepsilon)$ and $(\partial_t u_\varepsilon)$ are uniformly $L^{2}$-moderate.
\end{defn}

Thus we obtain the following properties of very weak solutions.

\begin{thm}[Existence]\label{Ext}
Let the coefficient $q$ and initial condition $u_0$ be distributions in $(0,1)$, the coefficient $a$ be a distribution in $[0,T]$. Then the initial/boundary problem  \eqref{C.p1}-\eqref{C.p3} has a very weak solution.
\end{thm}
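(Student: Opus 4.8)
The plan is to unwind Definition \ref{D2}: a very weak solution is produced by (i) fixing moderate regularisations of the data $q$, $a$, $u_0$, (ii) solving, for each fixed $\varepsilon>0$, the regularised \emph{classical} problem \eqref{vw1} via Corollary \ref{cor1}, and (iii) checking that the resulting net $(u_\varepsilon)$ together with $(\partial_t u_\varepsilon)$ is uniformly $L^2$-moderate. The estimates \eqref{ec1}--\eqref{ec2} of Corollary \ref{cor1} are tailor-made for step (iii), since their implicit constants are independent of the data and of $\varepsilon$, so all $\varepsilon$-dependence is carried by the explicit norms of $q_\varepsilon$, $a_\varepsilon$, $u_{0,\varepsilon}$.

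For step (i) I would regularise exactly as in the illustration preceding Definition \ref{D2}: extend each distribution by zero off its domain and convolve with the Friedrich mollifier $\psi_\varepsilon$, obtaining nets $q_\varepsilon$, $a_\varepsilon$, $u_{0,\varepsilon}$. By the structure theorem for distributions recalled in the Remark (see \eqref{moder}), $q_\varepsilon$ and $a_\varepsilon$ are $L^\infty$-moderate in the sense of Definition \ref{D11}, and $u_{0,\varepsilon}$ is $H^2$-moderate in the sense of Definition \ref{D1}(ii); that is, there are $N\in\mathbb{N}_0$ and $C>0$ with $\|q_\varepsilon\|_{L^\infty}$, $\|a_\varepsilon\|_{L^\infty}$, $\|u_{0,\varepsilon}\|_{L^2}$, $\|u''_{0,\varepsilon}\|_{L^2}$ all bounded by $C\varepsilon^{-N}$. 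For step (ii), for each fixed $\varepsilon$ one has $q_\varepsilon\in L^\infty(0,1)$, $a_\varepsilon\in L^\infty[0,T]$ and $u_{0,\varepsilon}\in L^2(0,1)$ with $u''_{0,\varepsilon}\in L^2(0,1)$, so Corollary \ref{cor1} yields a unique solution $u_\varepsilon\in C([0,T],L^2(0,1))$ of \eqref{vw1}.

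For step (iii) I would simply substitute the moderate bounds into the Corollary's estimates. From \eqref{ec1}, $\|u_\varepsilon(t,\cdot)\|_{L^2}\lesssim\|u_{0,\varepsilon}\|_{L^2}\leq C\varepsilon^{-N}$ uniformly in $t\in[0,T]$, so $(u_\varepsilon)$ is uniformly $L^2$-moderate. From \eqref{ec2}, $\|\partial_t u_\varepsilon(t,\cdot)\|_{L^2}\lesssim\|a_\varepsilon\|_{L^\infty}\big(\|u''_{0,\varepsilon}\|_{L^2}+\|q_\varepsilon\|_{L^\infty}\|u_{0,\varepsilon}\|_{L^2}\big)$, and since finite sums and products of moderate nets are again moderate (the exponents of $\varepsilon$ merely add), the right-hand side is bounded by $C'\varepsilon^{-M}$ for a suitable $M$; hence $(\partial_t u_\varepsilon)$ is uniformly $L^2$-moderate as well. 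By Definition \ref{D2} the net $(u_\varepsilon)$ is then a very weak solution.

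The main obstacle is the positivity requirement $a_\varepsilon(t)\geq a_0>0$ under which Corollary \ref{cor1} is stated and under which the time factors $e^{-\lambda_{n,\varepsilon}\int_0^t a_\varepsilon(\tau)d\tau}$ are controlled. A bare mollification of a distribution $a$ need not stay bounded below by a positive constant (e.g. for $a=\delta_1$), so I would either choose the regularisation to preserve a uniform lower bound, or observe that it already suffices that $\int_0^t a_\varepsilon(\tau)d\tau\geq0$, which gives $e^{-\lambda_{n,\varepsilon}\int_0^t a_\varepsilon}\leq1$ for the infinitely many positive eigenvalues $\lambda_{n,\varepsilon}$; the finitely many eigenvalues of $-\partial_x^2+q_\varepsilon$ that may turn negative then contribute only a factor $e^{|\lambda_{n,\varepsilon}|\,\|a_\varepsilon\|_{L^1}}$, which is moderate because $\lambda_{n,\varepsilon}$ and $\|a_\varepsilon\|_{L^1}\leq T\|a_\varepsilon\|_{L^\infty}$ are moderate. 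Verifying that this does not destroy the uniform-in-$\varepsilon$ moderateness, and that the constants in \eqref{ec1}--\eqref{ec2} genuinely do not depend on $\varepsilon$, is the only point requiring real care.
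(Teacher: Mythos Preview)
Your proposal is correct and follows essentially the same route as the paper: regularise $q$, $a$, $u_0$ to obtain $L^\infty$- and $H^2$-moderate nets, invoke Theorem~\ref{th1}/Corollary~\ref{cor1} for each fixed $\varepsilon$ to get a unique solution $u_\varepsilon$, and then feed the moderateness bounds into \eqref{ec1}--\eqref{ec2} to conclude that $(u_\varepsilon)$ and $(\partial_t u_\varepsilon)$ are uniformly $L^2$-moderate. The paper's proof is in fact terser than yours and does not discuss the positivity hypothesis $a_\varepsilon\geq a_0>0$ at all; your final paragraph identifying this as the one delicate point, and sketching how to handle it, goes beyond what the paper records.
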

\begin{proof}
Because the formulation of \eqref{C.p1}-\eqref{C.p3} in this instance might be impossible within the distributional sense due to complications arising from the product of distributions, we replace \eqref{C.p1}-\eqref{C.p3} with a regularized equation. In simpler terms, we introduce regularisations for $q$, $a$ and $u_0$ by using corresponding sets of smooth functions, denoted as $q_\varepsilon$, $a_\varepsilon$ and $u_{0,\varepsilon}$, derived from $C^\infty(0,1)$.

Therefore, $q_\varepsilon$, $a_\varepsilon$  are $L^\infty$-moderate regularisations and $u_{0,\varepsilon}$ be $H^2$-moderate regularisation of the coefficients $q$, $a$ and the Cauchy condition $u_0$ respectively. So by Definition 1 there exist $N\in \mathbb{N}_0$ and $C_1>0,\,C_2>0,\, C_3>0, \, C_4>0$, such that
$$\|q_\varepsilon\|_{L^\infty}\leq C_1\varepsilon^{-N},\quad \|u_{0,\varepsilon}\|_{L^2}\leq C_2\varepsilon^{-N}, \quad \|u''_{0,\varepsilon}\|_{L^2}\leq C_3\varepsilon^{-N} ,\quad \|a_\varepsilon\|_{L^\infty}\leq C_4 \varepsilon^{-N}.$$

Next, we consider the regularised problem \eqref{vw1}, fixing $\varepsilon\in (0,1]$. In this case, all statements and calculations of Theorem \ref{th1} are valid. Hence, by Theorem \ref{th1}, the equation \eqref {vw1} has a unique solution $u_\varepsilon(t,x)$ in the space $C([0,T];L^2(0,1))$.

By Corollary \ref{cor1}, there exist $N\in \mathbb{N}_0$ and $C>0$, such that
$$\|u_\varepsilon(t,\cdot)\|_{L^2}\lesssim \|u_{0,\varepsilon}\|_{L^2}\leq C\varepsilon^{-N},$$
$$\|\partial_t u_\varepsilon(t,\cdot)\|_{L^2}\lesssim \|a_\varepsilon\|^2_{L^\infty[0,T]}\left(\|u''_{0,\varepsilon}\|_{L^2}+\|q_\varepsilon\|_{L^\infty}\|u_{0, \varepsilon}\|_{L^2}\right)\leq C\varepsilon^{-N},$$
where the constants in these inequalities are independent of $u_0$, $q$ and $a$.
Hence, $(u_\varepsilon)$ is moderate, and the proof of Theorem \ref{Ext} is complete.
\end{proof}

Explaining the uniqueness of very weak solutions involves the process of \textquotedblleft measuring\textquotedblright \, changes in the associated families: the conditions of \textquotedblleft insignificance\textquotedblright \,  for these families of functions/distributions can be expressed as follows:

\begin{defn}[Negligibility]\label{D3}
Let $(u_\varepsilon)$, $(\Tilde{u}_\varepsilon)$ be two nets in $L^2(0,1)$. Then, the net $(u_\varepsilon-\Tilde{u}_\varepsilon)$ is called $L^2$-negligible, if for every $N\in \mathbb{N}$ there exist $C>0$ such that the following condition is satisfied
$$\|u_\varepsilon-\Tilde{u}_\varepsilon\|_{L^2}\leq C \varepsilon^N,$$
for all $\varepsilon\in (0,1]$. In the case where $u_\varepsilon=u_\varepsilon(t,x)$ is a net depending on $t\in [0,T]$, then the negligibility condition can be described as 
$$\sup\limits_{t\in[0,T]}\|u_\varepsilon(t,\cdot)-\Tilde{u}_\varepsilon(t,\cdot)\|_{L^2}\leq C \varepsilon^N,$$
uniformly in $t\in [0,T]$. The constant $C$ can depends on $N$ but not on $\varepsilon$. We say that $u_\varepsilon-\Tilde{u}_\varepsilon$ is then uniformly $L^2$-negligible.
\end{defn}

Let us state the  \textquotedblleft$\varepsilon$-parameterised problems\textquotedblright \,to be considered:
\begin{equation}\label{un1}
    \left\{\begin{array}{l}\partial_t u_\varepsilon(t,x)+a_\varepsilon(t)\left(-\partial^2_x u_\varepsilon(t,x)+q_\varepsilon(x) u_\varepsilon(t,x)\right)=0,\quad (t,x)\in [0,T]\times(0,1),\\
 u_\varepsilon(0,x)=u_{0,\varepsilon}(x),\,\,\, x\in (0,1), \\
u_\varepsilon(t,0)=0=u_\varepsilon(t,1),\quad t\in[0,T],
\end{array}\right.
\end{equation}
and
\begin{equation}\label{un2}
    \left\{\begin{array}{l}\partial_t \Tilde{u}_\varepsilon(t,x)+\Tilde{a}_\varepsilon(t)\left(-\partial^2_x \Tilde{u}_\varepsilon(t,x)+\Tilde{q}_\varepsilon(x) \Tilde{u}_\varepsilon(t,x)\right)=0,\quad (t,x)\in [0,T]\times(0,1),\\
 \Tilde{u}_\varepsilon(0,x)=\Tilde{u}_{0,\varepsilon}(x),\,\,\, x\in (0,1), \\
\Tilde{u}_\varepsilon(t,0)=0=\Tilde{u}_\varepsilon(t,1), \quad t\in[0,T].
\end{array}\right.
\end{equation}

\begin{defn}[Uniqueness of the very weak solution]\label{D4}
Let $q\in \mathcal{D}'(0,1)$, $a\in \mathcal{D}'[0,T]$. We say that initial/boundary problem \eqref{C.p1}-\eqref{C.p3} has an unique very weak solution, if
for all $L^\infty$-moderate nets $q_\varepsilon$, $\Tilde{q}_\varepsilon$, such that $(q_\varepsilon-\Tilde{q}_\varepsilon)$ is $L^\infty$-negligible; for all $L^\infty$-moderate nets $a_\varepsilon$, $\Tilde{a}_\varepsilon$, such that $(a_\varepsilon-\Tilde{a}_\varepsilon)$ is $L^\infty$-negligible; and for all $H^2$-moderate regularisations $u_{0,\varepsilon},\,\Tilde{u}_{0,\varepsilon}$, such that $(u_{0,\varepsilon}-\Tilde{u}_{0,\varepsilon})$, is $L^2$-negligible, we have that $u_\varepsilon-\Tilde{u}_\varepsilon$ is uniformly $L^2$-negligible.
\end{defn}

\begin{thm}[Uniqueness of the very weak solution]\label{Th-U}
Let the coefficient $q$ and initial condition $u_0$ be distributions in $(0,1)$, and the coefficient $a$
be a distribution in $[0, T]$. Then the very weak solution to the initial/boundary problem  \eqref{C.p1}-\eqref{C.p3} is unique.
\end{thm}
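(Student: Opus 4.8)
The plan is to reduce the uniqueness statement to the $L^2$ a priori estimate already established for the non-homogeneous equation, together with the elementary principle that the product of a moderate net with a negligible net is again negligible. By Definition \ref{D4} I am given two regularised nets $(u_\varepsilon)$ and $(\Tilde{u}_\varepsilon)$ solving the $\varepsilon$-parameterised problems \eqref{un1} and \eqref{un2}, with $L^\infty$-moderate coefficients $q_\varepsilon,\Tilde{q}_\varepsilon$ and $a_\varepsilon,\Tilde{a}_\varepsilon$ and $H^2$-moderate data $u_{0,\varepsilon},\Tilde{u}_{0,\varepsilon}$, such that $q_\varepsilon-\Tilde{q}_\varepsilon$ and $a_\varepsilon-\Tilde{a}_\varepsilon$ are $L^\infty$-negligible and $u_{0,\varepsilon}-\Tilde{u}_{0,\varepsilon}$ is $L^2$-negligible. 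Writing $W_\varepsilon:=u_\varepsilon-\Tilde{u}_\varepsilon$, $W_{0,\varepsilon}:=u_{0,\varepsilon}-\Tilde{u}_{0,\varepsilon}$ and $\mathcal{L}_\varepsilon:=-\partial_x^2+q_\varepsilon$, the goal by Definition \ref{D3} is to show that $\sup_{t\in[0,T]}\|W_\varepsilon(t,\cdot)\|_{L^2}\leq C_N\varepsilon^N$ for every $N$.

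First I would subtract the two regularised equations. Adding and subtracting $a_\varepsilon\mathcal{L}_\varepsilon\Tilde{u}_\varepsilon$ recasts the difference as the non-homogeneous problem
$$\partial_t W_\varepsilon+a_\varepsilon\mathcal{L}_\varepsilon W_\varepsilon=f_\varepsilon,\qquad W_\varepsilon(0,\cdot)=W_{0,\varepsilon},$$
with Dirichlet boundary conditions, where a further grouping gives the source in the form
$$f_\varepsilon=(\Tilde{a}_\varepsilon-a_\varepsilon)\bigl(-\partial_x^2\Tilde{u}_\varepsilon+\Tilde{q}_\varepsilon\Tilde{u}_\varepsilon\bigr)+a_\varepsilon\bigl(\Tilde{q}_\varepsilon-q_\varepsilon\bigr)\Tilde{u}_\varepsilon.$$
The virtue of this particular splitting is that each summand now carries an explicitly \emph{negligible} factor, namely $a_\varepsilon-\Tilde{a}_\varepsilon$ or $q_\varepsilon-\Tilde{q}_\varepsilon$.

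Next I would apply the $L^2$ a priori estimate \eqref{es-nh1} (equivalently \eqref{ec-nh1}) for the non-homogeneous equation to $W_\varepsilon$ with the smooth coefficients $a_\varepsilon,q_\varepsilon$, obtaining
$$\|W_\varepsilon(t,\cdot)\|_{L^2}\lesssim\|W_{0,\varepsilon}\|_{L^2}+T\|f_\varepsilon\|_{C([0,T],L^2(0,1))},$$
uniformly in $t\in[0,T]$. The first term is negligible by hypothesis. For the source I would use the equation satisfied by $\Tilde{u}_\varepsilon$ to rewrite the elliptic part as $-\partial_x^2\Tilde{u}_\varepsilon+\Tilde{q}_\varepsilon\Tilde{u}_\varepsilon=-\Tilde{a}_\varepsilon^{-1}\partial_t\Tilde{u}_\varepsilon$, so that the first summand of $f_\varepsilon$ is bounded in $L^2$ by $a_0^{-1}\|a_\varepsilon-\Tilde{a}_\varepsilon\|_{L^\infty[0,T]}\|\partial_t\Tilde{u}_\varepsilon(t,\cdot)\|_{L^2}$, while the second is bounded by $\|a_\varepsilon\|_{L^\infty[0,T]}\|q_\varepsilon-\Tilde{q}_\varepsilon\|_{L^\infty}\|\Tilde{u}_\varepsilon(t,\cdot)\|_{L^2}$. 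In each product the negligible factor decays faster than any power of $\varepsilon$, whereas the remaining factors are moderate: $\|\Tilde{u}_\varepsilon\|_{L^2}$ and $\|\partial_t\Tilde{u}_\varepsilon\|_{L^2}$ are uniformly $L^2$-moderate by estimates \eqref{ec1}--\eqref{ec2} of Corollary \ref{cor1} applied to \eqref{un2}, and $\|a_\varepsilon\|_{L^\infty}$ is moderate by assumption. Hence $\sup_{t}\|f_\varepsilon(t,\cdot)\|_{L^2}$ is negligible, and consequently so is $\|W_\varepsilon(t,\cdot)\|_{L^2}$ uniformly in $t$, which is precisely the uniqueness claim.

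The main obstacle is the control of the source term $f_\varepsilon$. The very weak solution framework only provides moderateness of $\Tilde{u}_\varepsilon$ and of $\partial_t\Tilde{u}_\varepsilon$, but not of $\partial_x^2\Tilde{u}_\varepsilon$, so the elliptic expression $-\partial_x^2\Tilde{u}_\varepsilon+\Tilde{q}_\varepsilon\Tilde{u}_\varepsilon$ cannot be estimated term by term and must instead be re-expressed through $\partial_t\Tilde{u}_\varepsilon$ by means of the equation itself. This manoeuvre relies on the standing lower bound $\Tilde{a}_\varepsilon\geq a_0>0$ (preserved by the regularisation) to bound $\Tilde{a}_\varepsilon^{-1}$; once this is secured, the argument reduces to the routine verification that moderate times negligible is negligible.
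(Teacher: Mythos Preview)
Your argument is correct and follows the same overall scheme as the paper: form the non-homogeneous problem for the difference $W_\varepsilon=u_\varepsilon-\Tilde u_\varepsilon$, apply the $L^2$ estimate \eqref{ec-nh1}, and conclude from the principle ``moderate $\times$ negligible $=$ negligible''. The algebraic decomposition of the source term you write down is equivalent to the paper's.

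The one genuine difference lies in how you control the contribution of $\partial_x^2\Tilde u_\varepsilon$ inside $f_\varepsilon$. The paper keeps $\partial_x^2\Tilde u_\varepsilon$ isolated and bounds $\|\partial_x^2\Tilde u_\varepsilon(t,\cdot)\|_{L^2}$ directly via estimate \eqref{ec4} of Corollary~\ref{cor1}, which gives moderateness from the $H^2$-moderate data $\Tilde u_{0,\varepsilon}$ and the $L^\infty$-moderate potential $\Tilde q_\varepsilon$. You instead group this term into $(\Tilde a_\varepsilon-a_\varepsilon)\,\Tilde{\mathcal L}_\varepsilon\Tilde u_\varepsilon$ and use the equation to rewrite $\Tilde{\mathcal L}_\varepsilon\Tilde u_\varepsilon=-\Tilde a_\varepsilon^{-1}\partial_t\Tilde u_\varepsilon$, so that only the moderateness of $\partial_t\Tilde u_\varepsilon$ is needed. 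Your route is a little more self-contained, since it invokes exactly the two quantities ($\Tilde u_\varepsilon$ and $\partial_t\Tilde u_\varepsilon$) whose moderateness is postulated in Definition~\ref{D2}; the price is the explicit use of the lower bound $\Tilde a_\varepsilon\geq a_0>0$ for $\Tilde a_\varepsilon^{-1}$, which is in any case implicit in all the a~priori estimates being quoted. One small inaccuracy in your commentary: your claim that the framework ``does not provide moderateness of $\partial_x^2\Tilde u_\varepsilon$'' is not quite right, since \eqref{ec4} does exactly that and is what the paper exploits; but this does not affect the validity of your alternative.
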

\begin{proof}
We denote to the family of solutions for the initial/boundary problems \eqref{un1} and \eqref{un2} as $u_\varepsilon$ and $\Tilde{u}_\varepsilon$ respectively. Defining $U_\varepsilon$ as the difference between these nets, specifically, $U_\varepsilon:=u_\varepsilon(t,\cdot)-\Tilde{u}_\varepsilon(t,\cdot)$, we can ascertain that $U_\varepsilon$ serves as a solution to the following equation
    \begin{equation}\label{unq}
    \left\{\begin{array}{l}\partial_t U_\varepsilon(t,x)+a_\varepsilon(t)\left(-\partial^2_x U_\varepsilon(t,x)+q_\varepsilon(x) U_\varepsilon(t,x)\right)=f_\varepsilon(t,x),\quad (t,x)\in [0,T]\times(0,1),\\
 U_\varepsilon(0,x)=(u_{0,\varepsilon}-\Tilde{u}_{0,\varepsilon})(x),\,\,\, x\in (0,1), \\
U_\varepsilon(t,0)=0=U_\varepsilon(t,1),
\end{array}\right.
\end{equation}
where we set 
\begin{eqnarray*}
    f_\varepsilon(t,x)&:=&(a_\varepsilon(t)-\Tilde{a}_\varepsilon(t))\partial^2_x\Tilde{u}_\varepsilon(t,x)\\
    &+&\left\{\Tilde{a}_\varepsilon(t)\left(\Tilde{q}_\varepsilon(x)-q_\varepsilon(x)\right)+q_\varepsilon(x)\left(\Tilde{a}_\varepsilon(x)-a_\varepsilon(x)\right)\right\}\Tilde{u}_\varepsilon(t,x)
\end{eqnarray*}
for the mass term to the non-homogeneous initial/boundary problem \eqref{unq}.

Passing to the $L^2$-norm of the $U_\varepsilon$ and using \eqref{ec-nh1}, we obtain the following result
$$
\|U_\varepsilon(t,\cdot)\|^2_{L^2}\lesssim \|U_\varepsilon(0,\cdot)\|^2_{L^2}+T^2\|f_\varepsilon\|^2_{C([0,T],L^2(0,1))}.
$$
For the $\|f_\varepsilon\|^2_{C([0,T],L^2(0,1))}$ by using \eqref{25}, \eqref{ec4} we get
\begin{eqnarray*}
\|f_\varepsilon\|^2_{C([0,T],L^2(0,1))}&\lesssim& \|\Tilde{a}_\varepsilon-a_\varepsilon\|^2_{L^\infty[0,T]}\left(\|\Tilde{u}''_{0,\varepsilon}\|^2_{L^2}+2\|\Tilde{q}_\varepsilon\|^2_{L^\infty}\|\Tilde{u}_{0,\varepsilon}\|^2_{L^2}\right)\\
&+&\|\Tilde{q}_\varepsilon-q_\varepsilon\|^2_{L^\infty}\|\Tilde{a}_\varepsilon\|^2_{L^\infty[0,T]}\|\Tilde{u}_\varepsilon\|^2_{C([0,T],L^2(0,1))}\\
&+&\|\Tilde{a}_\varepsilon-a_\varepsilon\|^2_{L^\infty[0,T]}\|q_\varepsilon\|^2_{L^\infty}\|\Tilde{u}_\varepsilon\|^2_{C([0,T],L^2(0,1))}.
\end{eqnarray*}
Next, using the initial condition of \eqref{unq} we obtain
\begin{eqnarray*}
\|U_\varepsilon(t,\cdot)\|^2_{L^2}&\lesssim& \|u_{0,\varepsilon}-\Tilde{u}_{0,\varepsilon}\|^2_{L^2}+T^2\|\Tilde{a}_\varepsilon-a_\varepsilon\|^2_{L^\infty[0,T]}\left(\|\Tilde{u}''_{0,\varepsilon}\|^2_{L^2}+2\|\Tilde{q}_\varepsilon\|^2_{L^\infty}\|\Tilde{u}_{0,\varepsilon}\|^2_{L^2}\right)\\
&+&T^2\|\Tilde{q}_\varepsilon-q_\varepsilon\|^2_{L^\infty}\|\Tilde{a}_\varepsilon\|^2_{L^\infty[0,T]}\|\Tilde{u}_\varepsilon\|^2_{C([0,T],L^2(0,1))}\\
&+&T^2\|\Tilde{a}_\varepsilon-a_\varepsilon\|^2_{L^\infty[0,T]}\|q_\varepsilon\|^2_{L^\infty}\|\Tilde{u}_\varepsilon\|^2_{C([0,T],L^2(0,1))}.
\end{eqnarray*}
Taking into account the negligibility of the nets $u_{0,\varepsilon}-\Tilde{u}_{0,\varepsilon}$, $q_\varepsilon-\Tilde{q}_\varepsilon$ and $a_\varepsilon-\Tilde{a}_\varepsilon$ we get
$$\|U_\varepsilon(t,\cdot)\|^2_{L^2}\leq C_1\varepsilon^{N_1}+\varepsilon^{N_2}\left(C_2\varepsilon^{-N_3}+C_3\varepsilon^{-N_4}\right)+\varepsilon^{N_5}\left(C_4\varepsilon^{-N_6}+C_5\varepsilon^{-N_7}\right)$$
for some $C_1>0,\,C_2>0, \, C_3>0,\,C_4>0,\, C_5>0,\,\,N_3,\,N_4,\, N_6,\, N_7\in \mathbb{N}$ and all $N_1,\,N_2,\, N_5 \in \mathbb{N}$, since $\Tilde{u}_\varepsilon$ is moderate. Then, for some $C_M>0$ and all $M\in \mathbb{N}$
$$\|U_\varepsilon(t,\cdot)\|_{L^2}\leq C_M \varepsilon^M.$$
The last estimate holds true uniformly in $t$ , and this completes the proof of Theorem \ref{Th-U}.
\end{proof}

\begin{thm}[Consistency]\label{Th-C} Assume that $q\in L^\infty(0,1)$, $a\in L^\infty[0,T]$ and let $(q_\varepsilon)$ be any $L^\infty$-regularisation of $q$, $(a_\varepsilon)$ be any $L^\infty$-regularisation of $a$, that is $\|q_\varepsilon-q\|_{L^\infty}\to 0$, $\|a_\varepsilon-a\|_{L^\infty[0,T]}\to 0$ as $\varepsilon\to 0$. Let the initial condition satisfy $u_0, \, u''_0 \in L^2(0,1)$. Let $u$ be a very weak solution of the initial/boundary problem \eqref{C.p1}-\eqref{C.p3}. Then for any families $q_\varepsilon$, $a_\varepsilon$, $u_{0,\varepsilon}$ such that $\|u_{0}-u_{0,\varepsilon}\|_{L^2}\to 0$, $\|q-q_{\varepsilon}\|_{L^\infty}\to 0$, $\|a-a_\varepsilon\|_{L^\infty[0,T]}\to 0$ as $\varepsilon\to 0$, any representative $(u_\varepsilon)$ of the very weak solution converges as 
$$\sup\limits_{0\leq t\leq T}\|u(t,\cdot)-u_\varepsilon(t,\cdot)\|_{L^2}\to 0$$ 
for $\varepsilon\to 0$ to the unique classical solution $u\in C([0,T];L^2(0,1))$ of the initial/boundary problem \eqref{C.p1}-\eqref{C.p3} given by Theorem \ref{th1}.
\end{thm}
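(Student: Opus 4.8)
The plan is to control the difference between the classical solution $u$ supplied by Theorem \ref{th1} and an arbitrary representative $(u_\varepsilon)$ of the very weak solution by regarding this difference as the solution of a non-homogeneous problem and then invoking the $L^2$-stability estimate \eqref{ec-nh1}. First I would set $V_\varepsilon(t,x):=u(t,x)-u_\varepsilon(t,x)$. Since $u$ solves \eqref{C.p1}--\eqref{C.p3} with the genuine coefficients $a,q$ while $u_\varepsilon$ solves the regularized system \eqref{vw1} with $a_\varepsilon,q_\varepsilon$ and datum $u_{0,\varepsilon}$, writing $\mathcal{L}_\varepsilon:=-\partial_x^2+q_\varepsilon$ and subtracting the two equations (using $\partial_t u=-a\mathcal{L}u$ and $\partial_t u_\varepsilon=-a_\varepsilon\mathcal{L}_\varepsilon u_\varepsilon$) shows that $V_\varepsilon$ is a solution of
\begin{equation*}
\partial_t V_\varepsilon+a_\varepsilon(t)\mathcal{L}_\varepsilon V_\varepsilon=f_\varepsilon,\qquad V_\varepsilon(0,\cdot)=u_0-u_{0,\varepsilon},\qquad V_\varepsilon(t,0)=V_\varepsilon(t,1)=0,
\end{equation*}
where the source is
\begin{equation*}
f_\varepsilon(t,x)=\bigl(a_\varepsilon(t)-a(t)\bigr)\,\mathcal{L}u(t,x)+a_\varepsilon(t)\bigl(q_\varepsilon(x)-q(x)\bigr)\,u(t,x).
\end{equation*}
This is the same mechanism used in the uniqueness argument, so the non-homogeneous estimates of Corollary \ref{cor2} are available.

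Next I would apply the $L^2$-estimate \eqref{ec-nh1} to $V_\varepsilon$, which yields
\begin{equation*}
\sup_{0\le t\le T}\|V_\varepsilon(t,\cdot)\|_{L^2}\lesssim \|u_0-u_{0,\varepsilon}\|_{L^2}+T\,\|f_\varepsilon\|_{C([0,T],L^2(0,1))}.
\end{equation*}
At this step one must verify that the constant is uniform in $\varepsilon$: since $a(t)\ge a_0>0$ and $\|a_\varepsilon-a\|_{L^\infty[0,T]}\to0$, we have $a_\varepsilon(t)\ge a_0/2>0$ for all sufficiently small $\varepsilon$, so the lower bound entering \eqref{ec-nh1} is $\varepsilon$-independent; likewise $\|a_\varepsilon\|_{L^\infty[0,T]}$ stays bounded and $q_\varepsilon\in L^\infty\subset L^2$. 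The first term on the right tends to $0$ by hypothesis.

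It then remains to show that $\|f_\varepsilon\|_{C([0,T],L^2(0,1))}\to0$. Estimating pointwise in $t$,
\begin{equation*}
\|f_\varepsilon(t,\cdot)\|_{L^2}\le \|a_\varepsilon-a\|_{L^\infty[0,T]}\,\|\mathcal{L}u(t,\cdot)\|_{L^2}+\|a_\varepsilon\|_{L^\infty[0,T]}\,\|q_\varepsilon-q\|_{L^\infty}\,\|u(t,\cdot)\|_{L^2}.
\end{equation*}
The two solution norms are bounded uniformly in $t$ by the a priori estimates for the classical solution: \eqref{eq2.1} gives $\|u(t,\cdot)\|_{L^2}\lesssim\|u_0\|_{L^2}$, and \eqref{est5} with $k=2$ gives $\|\mathcal{L}u(t,\cdot)\|_{L^2}=\|u(t,\cdot)\|_{W^2_\mathcal{L}}\lesssim\|u_0\|_{W^2_\mathcal{L}}$. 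The assumptions $u_0,u_0''\in L^2(0,1)$ and $q\in L^\infty(0,1)$ ensure $\mathcal{L}u_0=-u_0''+qu_0\in L^2(0,1)$, hence $\|u_0\|_{W^2_\mathcal{L}}<\infty$. Since $\|a_\varepsilon-a\|_{L^\infty[0,T]}\to0$ and $\|q_\varepsilon-q\|_{L^\infty}\to0$ while the remaining factors stay bounded, both terms vanish, so $\|f_\varepsilon\|_{C([0,T],L^2(0,1))}\to0$. Combining the three displays gives $\sup_{0\le t\le T}\|u(t,\cdot)-u_\varepsilon(t,\cdot)\|_{L^2}\to0$, as claimed.

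I expect the main obstacle to be exactly the two uniformity points flagged above: guaranteeing that the constant in \eqref{ec-nh1} does not degenerate as $\varepsilon\to0$ (secured by the lower bound $a_\varepsilon\ge a_0/2$), and extracting the regularity $u_0\in W^2_\mathcal{L}$ from $u_0''\in L^2$ and $q\in L^\infty$ so that $\|\mathcal{L}u(t,\cdot)\|_{L^2}$ is finite and uniformly bounded in $t$. Once these are in place, the convergence is a routine passage to the limit driven by the coefficient differences $a_\varepsilon-a$ and $q_\varepsilon-q$.
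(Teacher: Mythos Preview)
Your proof is correct and follows essentially the same route as the paper: define $V_\varepsilon=u-u_\varepsilon$, view it as a solution of a non-homogeneous problem with source built from the coefficient differences, apply the $L^2$-estimate \eqref{ec-nh1}, and pass to the limit. Your source term $f_\varepsilon=(a_\varepsilon-a)\mathcal{L}u+a_\varepsilon(q_\varepsilon-q)u$ is algebraically the same as the paper's $(a-a_\varepsilon)\partial_x^2 u+\{a_\varepsilon(q_\varepsilon-q)+q(a_\varepsilon-a)\}u$, just written more compactly; your bound of $\|\mathcal{L}u(t,\cdot)\|_{L^2}$ via \eqref{est5} is equivalent to the paper's use of \eqref{ec4} on $\|\partial_x^2 u\|_{L^2}$. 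If anything, your version is slightly more careful: you explicitly secure the $\varepsilon$-uniform lower bound $a_\varepsilon\ge a_0/2$ needed for the constants in Corollary \ref{cor2}, a point the paper leaves implicit.
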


\begin{proof}
As in our assumption, for $u$ and for $u_\varepsilon$ we introduce an auxiliary notation $V_\varepsilon(t, x):= u(t,x)-u_\varepsilon(t,x)$. Then the net $V_\varepsilon$ is a solution to the initial/boundary problem
\begin{equation}\label{51}
    \left\{\begin{array}{l}
        \partial_tV_\varepsilon(t,x)+a_\varepsilon(t)\left(-\partial^2_xV_\varepsilon(t,x)+q_\varepsilon(x)V_\varepsilon(t,x)\right)=f_\varepsilon(t,x),\\
        V_\varepsilon(0,x)=(u_0-u_{0,\varepsilon})(x),\quad x\in (0,1),\\        V_\varepsilon(t,0)=0=V_\varepsilon(t,1), \quad t\in[0,T],
    \end{array}\right.
\end{equation}
where 
$$f_\varepsilon(t,x):=(a(t)-a_\varepsilon(t))\partial^2_x u(t,x)+\left\{a_\varepsilon(t)(q_\varepsilon(x)-q(x))+q(x)(a_\varepsilon(t)-a(t))\right\}u(t,x).$$  Analogously to Theorem \ref{Th-U} we have that
\begin{eqnarray*}
    \|V_\varepsilon(t,\cdot)\|^2_{L^2}&\lesssim& \|u_{0}-u_{0,\varepsilon}\|^2_{L^2}+T^2\|a-a_\varepsilon\|^2_{L^\infty[0,T]}\left(\|u''_0\|^2_{L^2}+2\|q\|^2_{L^\infty}\|u_0\|^2_{L^2}\right)\\
&+&T^2\|a-a_\varepsilon\|^2_{L^\infty[0,T]}\|q\|^2_{L^\infty}\|u\|^2_{C([0,T],L^2(0,1))}\\
&+&T^2\|q-q_\varepsilon\|^2_{L^\infty}\|a_\varepsilon\|^2_{L^\infty[0,T]}\|u\|^2_{C([0,T],L^2(0,1))}.
\end{eqnarray*}
$$$$
Since
$$\|u_{0}-{u}_{0,\varepsilon}\|_{L^2}\to 0,\quad  \|q_\varepsilon-q\|_{L^\infty}\to 0, \quad \|a-a_\varepsilon\|_{L^\infty[0,T]}\to 0$$
for $\varepsilon\to 0$ and $u$ is the classical solution of the initial/boundary problem \eqref{C.p1}-\eqref{C.p3} we get
$$\|V_\varepsilon(t,\cdot)\|_{L^2}\to 0$$
for $\varepsilon\to 0$. This proves Theorem \ref{Th-C}.
\end{proof}

\textbf{Conflict of interest statement:} there is no conflict of interest.

\end{document}